\documentclass{article}
\usepackage[centertags]{amsmath}
\usepackage{amsfonts}
\usepackage{amssymb}
\usepackage{amsthm}
\usepackage{latexsym}
\usepackage{amsmath}
\usepackage{amsfonts}
\usepackage[margin=1in]{geometry}
\usepackage{graphicx}
\usepackage{xcolor}
\usepackage{epstopdf}
\usepackage{hyperref}

\newtheorem{theorem}{Theorem}[section]
\newtheorem{lemma}[theorem]{Lemma}

\title{Increasing stability in acoustic and elastic inverse source problems.}

\author{Mozhgan Nora Entekhabi, Victor Isakov}

\begin{document}

\maketitle
\begin{abstract}

We study increasing stability in the  inverse source problem for the Helmholtz equation and classical Lame system from boundary data at multiple wave numbers. By using the Fourier transform with respect to the wave numbers, explicit bounds for analytic continuation, Huygens' principle  and exact  bounds for initial boundary value problems, increasing (with larger wave numbers intervals) stability estimates are  obtained.  
\end{abstract}

\section{Introduction}\label{se_intro}

We are interested in uniqueness and stability in the inverse source problems for elliptic equations and systems when the source term, supported in a bounded domain $\Omega$, is to be found from the data on $\partial\Omega$. One of important examples is recovery of acoustic sources from boundary measurements of the pressure. This type of inverse source problems is also motivated by  wide applications in antenna synthesis, biomedical imaging  and geophysics, in particular, to tsunami prediction.
From the boundary data for one single linear differential equation or system, it is not possible to find the source uniquely \cite[Ch.4]{I}, but in case of family of equations (like the Helmholtz equation for various wave numbers in $(0,K)$) one can regain uniqueness. Then the crucial issue for applications is the stability of the source recovery. In general, a feature of inverse problems for elliptic equations is a logarithmic type stability estimate which results in  a robust recovery of only few parameters describing the source and hence yields very low resolution numerically.
For the Helmholtz equation we will show  increasing (getting nearly Lipschitz) stability when the Dirichlet data are given on the whole 
boundary and $K$ is getting large. Similar results are obtained for the time periodic solutions of the more complicated dynamical elasticity system.

We will use mostly standard notation. $\|u\|_{(l)}$ is the norm of a function $u$ in the Sobolev space $H^l$. $\Omega$ is a bounded domain in ${\mathbb R}^3$ with connected  ${\mathbb R}^3\setminus \bar\Omega$ and the boundary  $\partial \Omega \in C^2$. $C$ denote generic constants depending only on $\Omega$ and in the case of the elasticity system on the Lame parameters
$\lambda, \mu$ and density
$\rho$ which are assumed to be constant. 

Let $u(x,k)$ solve the scattering problem in $\mathbb{R}^3$ with the Sommerfeld radiation condition
\begin{equation}
\label{H}
 (\Delta+k^2) u  = - f_1 -ikf_0\;\mbox{in}\;\mathbb{R}^3,
\end{equation}
\begin{equation}
 \lim r(\partial_r u - ik u) =0\;\text{as}\;
 r=|x|\rightarrow +\infty,
\label{radiation}
\end{equation}
$f_0, f_1\in L_2(\Omega)$,
$f_0=f_1=0$ on ${\mathbb R}^3\setminus \bar\Omega$.

We are interested in uniqueness and stability of recovery of functions $f_0, f_1$ from the near field
data
\begin{equation}
{\label{D}}
   u= u_0  \text{ on }\; \partial \Omega, \text{ when} \; 0<k<K.
\end{equation}

\begin{theorem}\label{thm1}
Let $1<K$.
There exists
 $C$ such that
 \begin{equation}
\label{stability10}
\| f_0\|^2_{(0)}(\Omega)+\|f_1\|^2_{(-1)}(\Omega)
 \leq
C\left(\varepsilon_0^2 +
\frac{M_1^2}{1+K^{\frac{4}{3}} |E_0|^{\frac{1}{2}}}\right),
\end{equation}
\begin{equation}
\label{stability1}
\| f_0\|^2_{(1)}(\Omega)+\|f_1\|^2_{(0)}(\Omega)
 \leq
C\left(\varepsilon_1^2 +
\frac{M_2^2}{1+K^{\frac{4}{3}} |E_1|^{\frac{1}{2}}}\right)
\end{equation}
for all $ u \in H^2(\Omega)$ solving (\ref{H}), (\ref{radiation}).
Here
$$
\varepsilon_0^2 =
\int_0^K \|u( ,\omega)\|^2_{(0)}(\partial\Omega) d\omega,\;
$$
$$
\varepsilon_1^2 =
\int_0^K\left(\omega^2\|u( ,\omega)\|^2_{(0)}(\partial\Omega)+
\|u( ,\omega)\|^2_{(1)}(\partial\Omega)\right) d\omega,\;
$$
$E_j= - \ln \varepsilon_j, j=0,1$ and $
M_1=\|f_0\|_{(1)}(\Omega)+\|f_1\|_{(0)}(\Omega),\;M_2=\|f_0\|_{(2)}(\Omega)+\|f_1\|_{(1)}(\Omega)$.
 \end{theorem}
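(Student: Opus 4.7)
The plan is to use the Fourier transform in the wave number $k$ to turn the family of Helmholtz problems into an initial value problem for the wave equation in $\mathbb R^3$, exploit the strict Huygens' principle to reduce the unknown source recovery to observation of the wave over a \emph{finite} time interval on $\partial\Omega$, and finally use a quantitative analytic continuation in $k$ to cover the data gap at $k>K$.

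First, I would extend $u(x,k)$ from $k\in(0,K)$ to all of $\mathbb R$ by the natural symmetry forced by (\ref{H})--(\ref{radiation}) and set
$$ U(x,t)=\frac{1}{2\pi}\int_{\mathbb R}e^{-ikt}u(x,k)\,dk. $$
Fourier-transforming (\ref{H}) shows that $U$ solves $(\partial_t^2-\Delta)U=f_0(x)\delta'(t)+f_1(x)\delta(t)$, and Kirchhoff's formula (i.e.\ the sharp Huygens' principle in three dimensions) implies that $U|_{\partial\Omega}$ is supported in a bounded time window $(0,T_0)$ with $T_0$ comparable to $\mathrm{diam}(\Omega)$. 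Recovering $f_0,f_1$ from $u_0$ is then equivalent to recovering ``initial data'' for the wave equation from its lateral trace on $\partial\Omega\times(0,T_0)$, and for this problem there is a sharp energy/observability inequality
$$ \|f_0\|_{(0)}^2(\Omega)+\|f_1\|_{(-1)}^2(\Omega)\le C\int_0^{T_0}\|U(\cdot,t)\|_{(0)}^2(\partial\Omega)\,dt, $$
together with the analogous estimate one Sobolev order higher for (\ref{stability1}); this is the exact IBVP bound alluded to in the abstract.

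Next, by Plancherel the right-hand side equals a constant multiple of $\int_{\mathbb R}\|u(\cdot,k)\|_{(0)}^2(\partial\Omega)\,dk$. The data controls only the window $(0,K)$, giving $\varepsilon_0^2$. To bound the rest, I would introduce a cutoff $\rho\ge K$ and split. On the intermediate interval $(K,\rho)$, compact support of $f_0,f_1$ makes $u(\cdot,k)$ an entire function of $k$ of exponential type bounded in terms of $M_1$, so a three-lines/Hadamard-type estimate on a suitable rectangle (or sector) in the complex $k$-plane yields
$$ \|u(\cdot,k)\|_{(0)}^2(\partial\Omega)\le C\,\varepsilon_0^{\,2\mu(k)}\,M_1^{\,2(1-\mu(k))},\qquad K<k<\rho, $$
with the harmonic-measure weight $\mu(k)$ decaying like $K/\rho$ near $k=\rho$. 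On $|k|>\rho$, integration by parts in $t$ against $U$---which vanishes outside a bounded time interval by Huygens---produces a Paley--Wiener tail of order $M_1^2/\rho^{2}$.

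Finally I would balance the three contributions---the measured piece $\varepsilon_0^2$, the analytic-continuation loss on $(K,\rho)$, and the Huygens tail on $|k|>\rho$---and optimize $\rho$. Equating the exponential continuation factor to the polynomial tail $M_1^2/\rho^2$ is what produces the exponent $K^{4/3}|E_0|^{1/2}$ in (\ref{stability10}); (\ref{stability1}) is obtained by the same scheme with norms shifted up by one. The main obstacle is executing the analytic continuation quantitatively and sharply: one must choose the auxiliary analyticity region carefully and track constants in the three-lines inequality tightly enough to produce the exponent $4/3$ rather than something weaker, and the $\rho^{-2}$ decay (stronger than in even dimensions) furnished by the 3D Huygens' principle is essential for this precise rate.
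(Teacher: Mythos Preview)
Your overall architecture---Fourier transform to the wave equation, sharp Huygens' principle, the backward IBVP observability bound, Plancherel, and a high-frequency tail of size $M_1^2/k^2$---is exactly the route the paper takes. The gap is in the analytic continuation step.

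As written, your continuation bound
\[
\|u(\cdot,k)\|_{(0)}^2(\partial\Omega)\le C\,\varepsilon_0^{\,2\mu(k)}\,M_1^{\,2(1-\mu(k))},\qquad K<k<\rho,
\]
cannot be obtained from the data you have. A three-lines/two-constants argument needs a pointwise bound on the real interval $[0,K]$, but $\varepsilon_0^2=\int_0^K\|u(\cdot,\omega)\|_{(0)}^2(\partial\Omega)\,d\omega$ is only an integral; it does not control $\sup_{0<\omega<K}\|u(\cdot,\omega)\|^2$. (There is also the issue that $\|u(\cdot,k)\|^2$ is not itself holomorphic in $k$; one must first rewrite it as $\int_{\partial\Omega}u(x,k)u(x,-k)\,d\Gamma$ to get an entire object, and then it is no longer nonnegative off the real axis.) Your subsequent three-way split $(0,K)\cup(K,\rho)\cup(\rho,\infty)$ would then require integrating the pointwise continuation bound in $k$, with $\mu(k)$ varying, which is unpleasant.

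The paper avoids both problems by applying analytic continuation not to the integrand but to its antiderivative
\[
I_0(k)=2\int_0^k\int_{\partial\Omega}u(x,\omega)u(x,-\omega)\,d\Gamma(x)\,d\omega,
\]
which is entire in $k$ and satisfies $|I_0(k)|\le I_0(K)=2\varepsilon_0^2$ for every real $k\in[0,K]$, since the integrand is nonnegative there. Two-constants in the sector $|\arg k|<\pi/4$ then gives $|I_0(k)|\le C\varepsilon_0^{2\mu(k)}M_0^2\,e^{2(D+1)k}$ for real $k>K$, and a single choice $k=K^{2/3}E_0^{1/4}$ (rather than optimizing a second cutoff $\rho$) balances $|I_0(k)|$ against the Huygens tail $\int_{|\omega|>k}\|u\|^2\,d\omega\le Ck^{-2}M_1^2$ to produce the $K^{4/3}|E_0|^{1/2}$ denominator. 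Once you make this change---continue the running integral rather than the integrand---your outline goes through and coincides with the paper's proof.
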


Next we consider the inverse scattering source problem for  stationary elastic waves.
Let the displacement field ${\bf u}(x,k)$ solve the elasticity system 
\begin{equation}
\label{Helas}
(\mu\Delta+ (\mu+\lambda) \nabla div +\rho k^2) {\bf u}  = - {\bf f}_1 -ik{\bf f}_0\;\mbox{in}\;\mathbb{R}^3,
\end{equation}
 with the following radiation condition
\begin{equation}
\;
 \lim r(\partial_r {\bf u}(x;p) - ic_p^{-1}k {\bf u}(x;p)) =0,\;
  \lim r(\partial_r {\bf u}(x;s) - ic_s^{-1}k {\bf u}(x;s)) =0\;\text{as}\;
 r=|x|\rightarrow +\infty,\label{radiationelas}
 \end{equation}
 where ${\bf u}={\bf u}( ;p)+
{\bf u}( ;s)$ is the Helmholtz decomposition of ${\bf u}$ into compression/pressure and shear waves, $c_p=(\lambda+2\mu)^{\frac{1}{2}}\rho^{-\frac{1}{2}},
c_s=\mu^{\frac{1}{2}}\rho^{-\frac{1}{2}}$. 
Here $\lambda, \mu$ are (constant) Lame parameters satisfying an ellipticity condition $0<\lambda+\mu, 0<\mu$ and $\rho$ is a constant positive density. We are interested in stability of recovery functions ${\bf f}_0, {\bf f}_1$ from the near field
data
\begin{equation}
\label{Delas}
   {\bf u}= {\bf u}_0\;  \text{ on }\; \partial \Omega, \text{ when} \; 0<k<K.
\end{equation}.

\begin{theorem}\label{thm2}
Let $1<K$. There exists
 $C$ such that
\begin{equation}
\label{stability0elas}
\| {\bf f}_0\|^2_{(0)}(\Omega)+\|{\bf f}_1\|^2_{(-1)}(\Omega)
 \leq
C\left(\varepsilon^2 +
\frac{M_{2e}^2}{1+K^{\frac{4}{3}} |E|^{\frac{1}{2}}}\right),
\end{equation}
\begin{equation}
\label{stabilityelas}
\| {\bf f}_0\|^2_{(1)}(\Omega)+\|{\bf f}_1\|^2_{(0)}(\Omega)
 \leq
C\left(\varepsilon_e^2 +
\frac{M_3^2}{1+K^{\frac{4}{3}} |E_e|^{\frac{1}{2}}}\right)
\end{equation}
for all $ {\bf u} \in H^2(\Omega)$ solving (\ref{Helas}), (\ref{radiationelas}).
Here
$$
\varepsilon^2 =
\int_0^K
\|{\bf u}_0( ,\omega)\|^2_{(0)}(\partial\Omega) d\omega,\;
$$
$$
\varepsilon_e^2 =
\int_0^K\left(
\omega^2\|{\bf u}_0( ,\omega)\|^2_{(0)}(\partial\Omega)+
\|{\bf u}_0( ,\omega)\|^2_{(1)}(\partial\Omega)\right) d\omega,\;
$$
$E= - \ln \varepsilon,
E_e= - \ln \varepsilon_e$,  $M_{2e}=\|{\bf f}_0\|_{(2)}(\Omega)+\|{\bf f}_1\|_{(2)}(\Omega)$ and $M_3=\|{\bf f}_0\|_{(3)}(\Omega)+\|{\bf f}_1\|_{(3)}(\Omega)$.
 \end{theorem}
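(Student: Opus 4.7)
The plan is to reduce the elastic system to two scalar Helmholtz problems via the classical Helmholtz decomposition and then to apply the argument of Theorem \ref{thm1} to each scalar piece. Setting ${\bf u}=\nabla\varphi+\mathrm{curl}\,{\boldsymbol\psi}$ with $\mathrm{div}\,{\boldsymbol\psi}=0$, and ${\bf f}_j=\nabla F_j+\mathrm{curl}\,{\bf G}_j$ for $j=0,1$, a direct computation using $\mathrm{div}\,\mathrm{curl}=0$ and the identity $\Delta{\bf u}+\nabla\mathrm{div}\,{\bf u}$ evaluated on the decomposition shows that $\varphi$ and ${\boldsymbol\psi}$ solve scalar Helmholtz equations of the form (\ref{H}) with wavenumbers $k_p=k/c_p$ and $k_s=k/c_s$, the sources $-(\lambda+2\mu)^{-1}(F_1+ikF_0)$ and $-\mu^{-1}({\bf G}_1+ik{\bf G}_0)$, and with Sommerfeld conditions inherited from (\ref{radiationelas}). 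This reduces the vector problem to two instances of the scalar setting already handled in Theorem \ref{thm1}.

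I would then apply the three-step scheme underlying Theorem \ref{thm1} to each of $\varphi$ and ${\boldsymbol\psi}$: extend $k$ to all of $\mathbb{R}$ and take the Fourier transform in $k$ to pass to a scalar wave equation with time-distributional sources; invoke the strong Huygens' principle in three dimensions so that the boundary traces are supported in time windows of lengths $\mathrm{diam}(\Omega)/c_p$ and $\mathrm{diam}(\Omega)/c_s$; apply a trace/observability estimate for the wave equation over this finite window to bound the low-frequency part of $(F_j,{\bf G}_j)$; widen the measured band $(0,K)$ to $(0,\rho)$ via the explicit analytic-continuation estimate used in Theorem \ref{thm1}; dominate the tail $(\rho,\infty)$ by Plancherel together with the a priori Sobolev norm; and optimize the cut-off $\rho$. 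Each scalar piece then enjoys a bound of the form $1/(1+K^{4/3}|E_e|^{1/2})$, and summing the two contributions produces (\ref{stability0elas}) and (\ref{stabilityelas}) after re-expressing $F_j,{\bf G}_j$ in terms of ${\bf f}_j$.

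The main obstacle is that the Dirichlet measurement ${\bf u}_0$ on $\partial\Omega$ does not separately furnish the boundary traces of $\varphi$ and ${\boldsymbol\psi}$: recovering them from ${\bf u}_0=\nabla\varphi+\mathrm{curl}\,{\boldsymbol\psi}$ requires solving an auxiliary elliptic boundary value problem and costs one derivative of regularity, while the Helmholtz decomposition of the sources costs a second derivative. This double derivative loss, absent in the scalar case, is precisely the reason the admissible norms $M_{2e}$ and $M_3$ in Theorem \ref{thm2} are two orders higher than $M_1$ and $M_2$ in Theorem \ref{thm1}. A secondary technical point is that the two distinct speeds $c_p$ and $c_s$ generate two Huygens' windows and two analytic-continuation exponents; both nevertheless yield the same leading decay rate $K^{4/3}|E_e|^{1/2}$ up to constants absorbed into the generic $C$, so the bookkeeping is cosmetic rather than conceptual.
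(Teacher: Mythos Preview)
Your decomposition strategy is not the paper's, and it has a genuine gap at exactly the point you flag as ``the main obstacle.'' The paper never splits ${\bf u}$ into potentials and does not invoke Theorem~\ref{thm1} as a black box. Instead it repeats the three-step scheme \emph{directly} for the elastic system: it writes down the explicit radiating fundamental matrix ${\bf \Phi}(x-y;k)$, integrates by parts the $k^{-2}\nabla_x^2(\cdots)$ term (this is where the two extra derivatives on ${\bf f}_0,{\bf f}_1$ actually enter, not from a Helmholtz decomposition of the sources), proves Huygens' principle for the full dynamic Lam\'e system via the Courant--Hilbert representation, and then invokes the sharp elastic initial--boundary regularity results of Belishev--Lasiecka and Guo--Zhang to obtain the observability bound $\|{\bf f}_0\|^2_{(0)}+\|{\bf f}_1\|^2_{(-1)}\le C\|{\bf U}\|^2_{(0)}(\partial\Omega\times(0,c_s^{-1}D))$. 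The data quantity on the right is then exactly $\int_{\mathbb R}\|{\bf u}(\cdot,\omega)\|_{(0)}^2(\partial\Omega)\,d\omega$ by Parseval, so no auxiliary boundary problem is needed.

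In your approach, by contrast, applying Theorem~\ref{thm1} to $\varphi$ and ${\boldsymbol\psi}$ requires the quantities $\int_0^K\|\varphi(\cdot,\omega)\|^2_{(l)}(\partial\Omega)\,d\omega$ and the analogous ones for ${\boldsymbol\psi}$, and these are \emph{not} controlled by $\varepsilon^2$ or $\varepsilon_e^2$. From ${\bf u}_0|_{\partial\Omega}$ alone you cannot recover $\varphi|_{\partial\Omega}$ or ${\boldsymbol\psi}|_{\partial\Omega}$: the potentials are defined through volume integrals of $\mathrm{div}\,{\bf u}$ and $\mathrm{curl}\,{\bf u}$ over all of $\mathbb{R}^3$, and separating them on $\partial\Omega$ amounts to composing with the exterior Dirichlet solution operator for the Lam\'e system. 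That operator does not come with $k$-uniform $L^2(\partial\Omega)\to L^2(\partial\Omega)$ bounds, and the paper explicitly contrasts its result with earlier work (e.g.\ \cite{LY}, \cite{BLZ2018}) whose data norms implicitly contain precisely this operator. Your one-line ``auxiliary elliptic boundary value problem costs one derivative'' does not address the $k$-dependence and would, at best, reproduce those weaker statements rather than the theorem as stated. If you want to salvage the scalar-reduction idea, you would need a $k$-independent way to pass from $\|{\bf u}\|_{(l)}(\partial\Omega)$ to boundary norms of the potentials, and none is supplied.
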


The stability bounds \eqref{stability10},
\eqref{stability1},
\eqref{stability0elas},
\eqref{stabilityelas}
contain a Lipschitz stable parts $C\epsilon^2$ and conditional logarithmic stable part. This logarithmic part is natural and necessary since we deal with elliptic equations and systems. However with growing $K$ logarithmic part
is disappearing and the bounds become nearly Lipschitz.

For the Helmholtz equation uniqueness and numerical results were obtained in  \cite{EV2009}. First increasing stability results 
\cite{BLT2010} handle more particular case and by quite different (direct spatial Fourier analysis) methods.
In \cite{CIL2016} a temporal Fourier transform, sharp bounds of the analytic continuation to higher wave numbers, and exact observability bounds for associated hyperbolic equations  have been introduced to obtain increasing stability bounds for the three dimensional Helmholtz equation.  In \cite{EI} the methods and results of \cite{CIL2016} are extended to the more complicated case of the two dimensional Helmholtz equation. In these works one is using the complete Cauchy data on $\partial\Omega$ instead of \eqref{D}, \eqref{Delas} which are much more realistic: indeed, the common measuring acoustical devise (microphone) registers only pressure $u$, while in seismic one typically collects displacements ${\bf u}$. In \cite{LY} a spherical $\Omega$ was considered and there is a result on increasing stability from only $u$ on $\partial\Omega$, but the used norm of the data was not the standard norm, but it involved the operator of solution of the exterior Dirichlet problem. In the recent preprint \cite{BLZ2018} the results similar to \cite{LY} are obtained for the elasticity system \eqref{Helas}. Moreover, there is a very strong numerical evidence of
improving resolution of the inverse source problem for larger $K$ given in
\cite{BLT2010}, \cite{CIL2016}, \cite{IL} and other publications. 

In this paper we consider an arbitrary domain $\Omega$ and obtain increasing stability bounds \eqref{stability10}, 
\eqref{stability1}, 
\eqref{stability0elas},
\eqref{stabilityelas} which
contain most natural Sobolev norms of the Dirichlet type data \eqref{D}, \eqref{Delas}. These results are new even for the Helmholtz equation, since we only use the Dirichlet data on $\partial\Omega$ and reduce regularity assumptions in \eqref{stability10}. We also handle the classical linear elasticity system.
As in \cite{CIL2016}, in the current work we 
use the Fourier transform in time to reduce our inverse source problem to identification of the initial data in the hyperbolic initial value problem  by lateral Cauchy data (observability in control theory). We derive our increasing stability estimates by using sharp bounds of analytic continuation of the data from $(0,K)$ onto
$(0,+\infty)$ given in \cite{CIL2016} and then subsequently utilized in \cite{EI}, \cite{LY}, \cite{BLZ2018}. A new idea which considerably simplified the proof and hence the exposition is to use the Huygens's principle and known Sakamoto type energy bounds for the corresponding hyperbolic initial boundary value problem (backward in time) to avoid a need in the complete Cauchy data on $\partial\Omega$ and in a direct use of the exact boundary controllability  results. Of course, the Huygens' principle is valid only in odd spatial dimensions and for special  constant coefficients. This restricts extensions of this approach. However, we can handle two very important in applications cases with a minimal amount of technicalities.

More is known about uniqueness and increasing stability in the Cauchy problem for elliptic equations and for identification of the Schr\"odinger potential. 
Classic Carleman estimates imply  some conditional H\"older type stability estimates for solutions of the elliptic Cauchy problem.  In 1960  F. John
\cite{J}, \cite{I}  
showed that for the continuation of the Helmholtz equation from the unit disk onto any larger disk the stability estimate, which is uniform with respect to the 
wave numbers, is still of logarithmic types. In the papers  \cite{HI}, \cite{I1} it was demonstrated that in a certain sense stability is always improving for larger $k$ under (pseudo) convexity conditions on the geometry of the domain and of the coefficients of the elliptic equation. In \cite{IK} there are first analytic and numerical results on improving stability without (pseudo)convexity conditions.  Finally in  \cite{I3} it was shown that it is true for general second order elliptic equations disregard on any convexity conditions.
 
 Increasing stability for the Schr\"odinger potential from the complete set of the boundary data (the Dirichlet-to Neumann map) was demonstrated in \cite{I2}. An extension of this result for the attenuation and conductivity coefficients is obtained
 in \cite{ILW2016}.

The rest of this paper is organized as follows. In Section \ref{se_acoustic}  we adjust and use the methods of \cite{CIL2016}, in particular bounds of the analytic continuation of the
 needed norms of the Dirichlet data from $(0,K)$ onto a sector of the complex plane  $k=k_1+ik_2$, and use them and sharp bounds in \cite{CIL2016} of the harmonic measure of $(0,K)$ in this sector to derive explicit bounds of the analytic continuation of this norms from $(0,K)$ onto the real axis. We use the Huygens' principle and known
 sharp bounds of solutions of the initial boundary value problems for the wave equation for a short derivation of analogues of  exact observability bounds with reduced data in Lemma 2.4. These results are crucial for the proof of Theorem 1.1. In Section
 \ref{se_elastic} the proofs of the preceding section are extended onto much more complicated case of the classical Lame system of the elasticity theory. While the  radiating fundamental solution is not so simple, it has some features which we exploit by usimng integration by parts to adjust the proof of Lemma 2.1 to get needed bounds of the analytic continuation in Lemma 3.1. In Lemma 3.2 we obtain some rates of decay of scattering solutions of the elasticity system which are less precise than the corresponding result of Lemma 2.2 because  we are not aware of sharp regularity results available  scalar transmission problems.  The Huygens' principle is also valid for the initial  value problem for the associated dynamical elasticity system. By using it and  the  versions \cite{BL2002}, \cite{GZ} of sharp boundary regularity results of \cite{LLT86}, \cite{S} for the initial boundary value  dynamical elastic we obtain crucial bounds of the initial data by the lateral displacements. After that the proof for elasticity system proceeds as in section 2 for the Helmholtz equation.

\section{Increasing stability for acoustic waves}\label{se_acoustic}

 The well-known integral representation for (\ref{H}), (\ref{radiation}) yields
\begin{equation}
\label{int}
u(x,k) =  \frac{1}{4\pi}\int_{\Omega}
(f_1(y)+ikf_0(y))\frac{e^{ik|x-y|}}{|x-y|} dy.
\end{equation}

Due to (\ref{int}),
\begin{equation}
\label{int0}
\int_{-\infty}^{\infty}\|u(,\omega)\|_{(0)}^2(\partial\Omega)d\omega  = I_0(k) + \int_{k< |\omega|} \|u(,\omega)\|_{(0)}^2(\partial\Omega)d\omega, 
\end{equation}
\begin{equation}
\label{int1}
\int_{-\infty}^{\infty} \omega^2\|u(,\omega)\|_{(0)}^2(\partial\Omega)d\omega = I_1(k) + \int_{k< |\omega|} \omega^2\|u(,\omega)\|_{(0)}^2(\partial\Omega)d\omega,
\end{equation}
\begin{equation}
\label{int2}
\int_{-\infty}^{\infty} \|\nabla_{\tau} u(,\omega)\|_{(0)}^2 (\partial\Omega)d\omega  = I_2(k) + \int_{k< |\omega|} \|\nabla_{\tau} u(,\omega)\|_{(0)}^2 (\partial\Omega)d\omega,
\end{equation}
where $I_0(k), I_1(k)$ and $I_2(k)$ are defined as
\begin{equation}
I_0(k) = 2\int_0^k \int_{\partial\Omega}
\left(\int (f_1(y)+i\omega f_0(y))
\frac{e^{i\omega|x-y|}}{|x-y|} dy\right)
\left(\int (f_1(y)-i\omega f_0(y))
\frac{e^{-i\omega|x-y|}}{|x-y|} dy\right) d \Gamma(x) d\omega, \label{I0}
\end{equation}
\begin{equation}
I_1(k) = 2\int_0^k \omega^2\int_{\partial\Omega}
\left(\int (f_1(y)+i\omega f_0(y))
\frac{e^{i\omega|x-y|}}{|x-y|} dy\right)
\left(\int (f_1(y)-i\omega f_0(y))
\frac{e^{-i\omega|x-y|}}{|x-y|} dy\right) d \Gamma(x) d\omega, \label{I1}
\end{equation}
\begin{equation}
I_2(k) = 2\int_0^k\int_{\partial\Omega}
\left(\int (f_1(y)+i\omega f_0(y))
\nabla_{\tau,x}\frac{e^{i\omega|x-y|}}{|x-y|} dy \right)
\left(\int (f_1(y)-i\omega f_0(y))
\nabla_{\tau,x}\frac{e^{-i\omega|x-y|}}{|x-y|} dy\right)
d \Gamma(x) d\omega , \label{I2}
\end{equation}
$\nabla_{\tau}$ is the tangential projection of the gradient and we used that $\overline{u(x,\omega)}= u (x,-\omega)$.
Due to the definitions of the following norms of the boundary data:
\begin{equation}
\label{varepsilon}
\varepsilon_0^2 = I_0(K),\;
\varepsilon_1^2 = I_1(K)+I_2(K).
\end{equation}
 The truncation level $k$ in (\ref{I0}),(\ref{I1}) and (\ref{I2}) is important to keep balance between the known data and the unknown information when $k\in [K,\infty)$.

Since the integrands are entire analytic functions of $\omega$, the integrals in (\ref{I0}),(\ref{I1}), (\ref{I2}) with respect to $\omega$ can be taken over any path joining points $0$ and $k$ of the complex plane.
Thus $I_1(k), I_2(k)$ are entire analytic functions of $k=k_1+ik_2$. As in \cite{CIL2016}, we will need the following elementary estimates.

\begin{lemma}
Let $supp f_0, supp f_1 \subset \Omega$ and $f_1 \in H^1(\Omega)$, $f_0\in H^1(\Omega)$. Then
\begin{equation}
\label{boundI0}
|I_0(k)|\leq
8\pi|\partial\Omega| D \left(|k|
\|f_1\|^2_{(0)}(\Omega)+
\frac{1}{3}|k|^3
\|f_0\|^2_{(0)}(\Omega)\right)e^{2 D|k_2|},
\end{equation}
\begin{equation}
\label{boundI1}
|I_1(k)|\leq
8\pi|\partial\Omega| D \left(\frac{1}{3}|k|^3
\|f_1\|^2_{(0)}(\Omega)+
\frac{1}{5}|k|^5
\|f_0\|^2_{(0)}(\Omega)\right)e^{2 D|k_2|},
\end{equation}
\begin{equation}
\label{boundI2}
|I_2(k)|\leq
8\pi|\partial\Omega| D \left(|k|
\|f_1\|^2_{(1)}(\Omega)+
{\frac{1}{3}|k|^3
\|f_0\|^2_{(1)}(\Omega)}\right)e^{2 D|k_2|},
\end{equation}
where $|\partial \Omega|$ is the area of $\partial \Omega$ and $ D= \sup |x-y|$ over
$x,y \in \Omega$.
\end{lemma}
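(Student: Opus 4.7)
I would prove the three bounds (\ref{boundI0})--(\ref{boundI2}) by estimating the integrands directly after deforming the $\omega$--contour. Since the integrands of $I_0, I_1, I_2$ are entire in $\omega$, the $\omega$--integrals may be taken along the straight segment $\omega = tk$, $t\in[0,1]$, on which $|\omega|=t|k|$, $|\mathrm{Im}\,\omega|\le|k_2|$, and $d\omega=k\,dt$. On this path $|e^{\pm i\omega|x-y|}|\le e^{|k_2| D}$ for all $x,y\in\overline\Omega$, which is the origin of the factor $e^{2D|k_2|}$ appearing in all three bounds.

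For $I_0$, denote the two inner integrals by $A(x,\omega)$ and $B(x,\omega)$. Using $|f_1(y)+i\omega f_0(y)|\le|f_1(y)|+|\omega|\,|f_0(y)|$ together with the Cauchy--Schwarz inequality in $y$, I obtain
\[
|A(x,\omega)|\le e^{|k_2|D}\bigl(\|f_1\|_{(0)}(\Omega)+|\omega|\,\|f_0\|_{(0)}(\Omega)\bigr)\left(\int_\Omega |x-y|^{-2}\,dy\right)^{1/2},
\]
with the same bound for $|B(x,\omega)|$. For each $x\in\overline\Omega$ one has $\Omega\subset B(x,D)$, so passing to spherical coordinates gives $\int_\Omega|x-y|^{-2}\,dy\le 4\pi D$. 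Multiplying the bounds for $|A|$ and $|B|$, integrating over $\partial\Omega$ (which contributes the factor $|\partial\Omega|$), and then integrating against $|k|\,dt$ using $\int_0^1 dt=1$ and $\int_0^1 t^2\,dt=\tfrac{1}{3}$ produces the coefficients $|k|$ and $\tfrac{1}{3}|k|^3$ in (\ref{boundI0}). The bound (\ref{boundI1}) follows from the same computation with the extra factor $\omega^2 = t^2|k|^2$ in the integrand, which replaces the $t$--integrals by $\int_0^1 t^2\,dt=\tfrac{1}{3}$ and $\int_0^1 t^4\,dt=\tfrac{1}{5}$ and thus yields the coefficients $\tfrac{1}{3}|k|^3$ and $\tfrac{1}{5}|k|^5$.

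The main obstacle is the bound (\ref{boundI2}) for $I_2$, because applying the tangential gradient $\nabla_{\tau,x}$ to the kernel $\frac{e^{i\omega|x-y|}}{|x-y|}$ produces a $|x-y|^{-2}$ singularity which is not in $L^2_y(\Omega)$ in three dimensions, so the Cauchy--Schwarz argument used above no longer closes. The key observation that resolves this is translation invariance: the kernel depends only on $x-y$, so for any fixed $x\in\partial\Omega$ and any tangent vector $\tau(x)$,
\[
\tau(x)\cdot\nabla_x\frac{e^{i\omega|x-y|}}{|x-y|}=-\tau(x)\cdot\nabla_y\frac{e^{i\omega|x-y|}}{|x-y|}.
\]
Integrating by parts in $y$ transfers this tangential-at-$x$ derivative onto $f_0$ and $f_1$; because $\mathrm{supp}\,f_j\subset\Omega$, the trace of $f_j$ on $\partial\Omega$ vanishes, so no boundary contribution arises. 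The resulting integrand has exactly the structure of the one appearing in $I_0$, but with $f_j$ replaced by a component of $\nabla f_j$. Bounding each such component by $|\nabla f_j|$ and repeating the $I_0$ argument verbatim then gives (\ref{boundI2}) with $\|f_j\|_{(1)}(\Omega)$ in place of $\|f_j\|_{(0)}(\Omega)$.
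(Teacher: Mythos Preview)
Your proposal is correct and follows essentially the same route as the paper: the same parametrization $\omega=tk$, the same exponential bound on the kernel, the same Cauchy--Schwarz estimate combined with $\int_\Omega|x-y|^{-2}\,dy\le 4\pi D$ via polar coordinates, and the same integration-by-parts trick $\nabla_x K(x-y)=-\nabla_y K(x-y)$ (using $\operatorname{supp} f_j\subset\Omega$ to kill the boundary term) for $I_2$. The only point you glossed over is the cross term in $(\|f_1\|+|\omega|\,\|f_0\|)^2$, which the paper handles by $(a+b)^2\le 2(a^2+b^2)$ before integrating in $t$; this is where the extra factor of $2$ in the constant comes from.
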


\begin{proof}
Using the parametrization $\omega = ks, s \in (0,1)$ in the line integral and the elementary inequality
$|e^{i\omega|x-y|}|\leq e^{|k_2|D }$
it is easy to derive that
\begin{align*}
|I_0(k)| & \leq
2\int_0^1|k| 
\left(\int_{\partial\Omega}
\left(\int_{\Omega} (|f_1(y)|+|k|s |f_0(y)|)
\frac{e^{|k_2| D}}{|x-y|} dy\right)^2
 d \Gamma(x) \right) d s \\
 & \leq
4\int_0^1|k|
\int_{\partial\Omega}
\left(\int_{\Omega} (|f_1(y)|^2+|k|^2s^2 |f_0(y)|^2)\right)dy
\left(\int_{\Omega} \frac{e^{2|k_2|D}}{|x-y|^2} dy\right)
 d \Gamma(x)  d s,
\end{align*}
where the Schwarz inequality is used for the integrals with respect to $y$. The polar coordinates $r=|y-x|$ (originated at $x$) with respect to $y$ yield
$$
|I_0(k)| \leq 4|k|
\int_0^1
 \left(\int_{\Omega}\left( 
 |f_1|^2(y)
+|k|^2s^2|f_0|^2 \right)dy\right) ds
\int_{\partial\Omega} \left(4\pi\int_0^D e^{2|k_2|D} d r \right) d \Gamma(x).
$$
Integrating with respect to $s$, $x$, and $r$, we complete the proof of (\ref{boundI0}).

The bound (\ref{boundI1}) is derived in \cite{CIL2016}, Lemma 2.1,   in a similar way.

The bound (\ref{boundI2}) is also actually obtained in \cite{CIL2016}.

Indeed,
$$
|I_2(k)| \leq 2|k|
\int_0^1\int_{\partial\Omega}
 |\nabla_{\tau}
 \int_{\Omega}\left( 
 f_1(y)
+ksf_0(y)\right)\frac{e^{iks|x-y|}}{|x-y|} dy |^2 d\Gamma(x) ds\leq
$$
$$
2|k|\int_0^1\int_{\partial\Omega}
 |\nabla_{x}
 \int_{\Omega}\left( 
 f_1(y)
+ksf_0(y)\frac{e^{iks|x-y|}}{|x-y|} \right)dy |^2 d\Gamma(x) ds.
$$
Observing that
$\nabla_x\frac{e^{iks|x-y|}}{|x-y|}=-\nabla_y\frac{e^{iks|x-y|}}{|x-y|}$ and integrating by parts with respect to $y$
we reduce the proof to the above proof of \eqref{boundI0}.

\end{proof}

The following steps are needed to link the unknown values of $I_0(k), I_1(k)$ and $I_2(k)$ for $k\in [K,\infty)$  to the known values $\varepsilon_j$ defined in (\ref{varepsilon}).
 
Let $S$ be the sector $\{k: -\frac{\pi}{4} < \arg k < \frac{\pi}{4} \}$ and $\mu(k)$
be the harmonic measure of the interval
$[0,K]$ in $S\setminus [0,K]$. Observe that $|k_2|\leq k_1$ when $k\in S$, so
$$
|I_0(k)e^{-2(D+1)k}|\leq
C \left(|k_1|
\|f_1\|^2_{(0)}(\Omega)+
|k_1|^3
\|f_0\|^2_{(0)}(\Omega)\right)e^{-2k_1} \leq C M_0^2,
$$
with $M_0= \|f_0\|_{(0)}(\Omega) +\|f_1\|_{(0)}(\Omega)$ and  generic constants $C$.
Noticing that
$$
|I_0(k)e^{-2(D+1)k}|\leq \varepsilon_0^2\;
\text{on}\;[0,K],
$$
we conclude that
\begin{equation}
\label{I0omega}
|I_0(k)e^{-2(D+1)k}|\leq
C \varepsilon_0^{2\mu(k)}M_0^2,
\end{equation}
when $K<k<+\infty$. 

Similar arguments also yield
\begin{equation}
\label{I1omega}
|I_1(k)e^{-2(D+1)k}|\leq
C \varepsilon^{2\mu(k)}M_0^2,
\end{equation}
\begin{equation}
\label{I2omega}
|I_2(k)e^{-2(D+1)k}|\leq
C \varepsilon^{2\mu(k)}M_1^2.
\end{equation}

We need  the following lower bound of the harmonic measure $\mu(k)$ given in \cite{CIL2016}, Lemma 2.2.

\begin{lemma}
If $0<k< 2^{\frac{1}{4}}K$, then
\begin{equation}
\label{boundharm1}
\frac{1}{2}\leq \mu(k).
\end{equation}

If $ 2^{\frac{1}{4}}K < k$, then
\begin{equation}
\label{boundharm2}
\frac{1}{\pi}\left(\left(\frac{k}{K}\right)^4-1\right)^{-\frac{1}{2}} \leq \mu(k).
\end{equation}
\end{lemma}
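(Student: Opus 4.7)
The plan is to reduce the harmonic measure in the slit sector to a one--variable closed-form expression by a conformal change of variable, and then estimate the resulting elementary function on each of the two ranges. First I would scale $k \mapsto k/K$, which preserves the sector $S$ and sends the slit $[0,K]$ to $[0,1]$; the sub-range $0 < k \leq K$ is then trivial because the evaluation point lies on the slit and the boundary value of the harmonic measure equals $1 \geq 1/2$. It therefore suffices to handle $\tilde{k} := k/K > 1$ in $S \setminus [0,1]$.

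Next I would unfold the slit sector onto a half--plane through the composition $\varphi = \varphi_{3} \circ \varphi_{2} \circ \varphi_{1}$: the map $\varphi_{1}(z)=z^{2}$ sends $S \setminus [0,1]$ onto the right half--plane minus $[0,1]$; a second squaring $\varphi_{2}(z)=z^{2}$ sends that onto the slit plane $\mathbb{C}\setminus (-\infty,1]$; and $\varphi_{3}(z)=\sqrt{z-1}$ (principal branch) sends the slit plane conformally onto the right half--plane, carrying the two sides of $[0,1]$ onto the imaginary segment $[-i,i]$ and the two sides of $(-\infty,0]$ onto the rest of the imaginary axis. Under $\varphi$ the positive real point $\tilde{k}$ goes to $p := \sqrt{\tilde{k}^{4}-1} > 0$. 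Conformal invariance of harmonic measure then identifies $\mu(k)$ with the harmonic measure of $[-i,i]$ in the right half--plane at $p$. Rotating by $i$ to reach the standard upper half--plane and applying the elementary angle formula $\omega([-1,1],ip) = \tfrac{1}{\pi}\arg\tfrac{ip-1}{ip+1}$ yields the explicit identity
\[
\mu(k) \;=\; 1 - \frac{2}{\pi}\arctan\sqrt{(k/K)^{4}-1} \;=\; \frac{2}{\pi}\arctan\frac{1}{\sqrt{(k/K)^{4}-1}},\qquad k>K.
\]

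From this closed form both bounds are immediate. When $K < k \leq 2^{1/4}K$ one has $(k/K)^{4}-1 \leq 1$, so the argument of the second $\arctan$ is $\geq 1$ and $\mu(k) \geq \tfrac{2}{\pi}\arctan 1 = \tfrac{1}{2}$. When $k > 2^{1/4}K$ one sets $y := ((k/K)^{4}-1)^{-1/2} \in (0,1)$ and uses the inequality $\arctan y \geq y/2$ on $[0,1]$, which follows from $(\arctan y - y/2)' = (1-y^{2})/(2(1+y^{2})) \geq 0$ together with equality at $y = 0$; this yields $\mu(k) \geq \tfrac{1}{\pi}((k/K)^{4}-1)^{-1/2}$. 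The only genuinely non--routine step is spotting the right conformal model: once one sees that a double squaring followed by $\sqrt{\,\cdot-1}$ flattens the two--sided slit in a sector onto a single boundary interval of a half--plane, the rest reduces to half--plane trigonometry and one calculus inequality.
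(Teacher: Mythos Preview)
Your argument is correct. The conformal composition $z\mapsto z^{2}\mapsto z^{2}\mapsto\sqrt{z-1}$ indeed carries the slit sector $S\setminus[0,1]$ conformally onto the right half--plane with the boundary correspondence you describe, and the resulting Poisson--integral computation gives the exact formula
\[
\mu(k)=\frac{2}{\pi}\arctan\Bigl(\bigl((k/K)^{4}-1\bigr)^{-1/2}\Bigr),\qquad k>K,
\]
from which both \eqref{boundharm1} and \eqref{boundharm2} follow by the elementary estimates you state. The calculus inequality $\arctan y\ge y/2$ on $[0,1]$ is verified exactly as you indicate.

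As for comparison with the paper: the paper does \emph{not} prove this lemma at all. It simply quotes the statement and cites \cite{CIL2016}, Lemma~2.2, for the proof. Your write-up therefore supplies a self-contained argument where the present paper gives none; the conformal-mapping route you take is the natural (and presumably the original) one.

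One minor expository point: for $0<k\le K$ you appeal to the boundary values of $\mu$ on the slit. Strictly speaking $\mu$ is defined on $S\setminus[0,K]$, so you are implicitly extending it by its boundary data; this is harmless since the lemma is only ever applied in the paper for $k>K$, and your explicit formula shows $\mu(k)\to1$ as $k\to K^{+}$, making the extension continuous across the tip of the slit.
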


We consider the hyperbolic initial value problem
\begin{equation}
\label{Cauchy}
\partial_t^2 U -\Delta U=0 \;\text{on}\;\mathbb{R}^3\times(0,+\infty),\;
U( ,0)=f_0,\;\partial_t U( ,0)=-f_1\;
\text{on}\;\mathbb{R}^3.
\end{equation}
We define $U(x,t)=0$ when $t<0$.
As shown in \cite{CIL2016}, section 4, the solution of \eqref{H} coincides with the Fourier transform  of $U$, namely
\begin{equation}
\label{uk}
u(x,k)=
\frac{1}{\sqrt{2\pi}}\int_{-\infty}^{+\infty}
U(x,t) e^{ikt} dt.
\end{equation}

To proceed, the estimate for remainders in \eqref{int0}, \eqref{int1}, \eqref{int2} is used and summarized 
in the next result proven as Lemma 4.1 in \cite{CIL2016}.

\begin{lemma}
\label{lemma_apriori}
Let $u$ be a solution to the forward problem
(\ref{H}), (\ref{radiation}) with $f_1 \in H^1(\Omega)$ and $f_0\in H^2(\Omega)$, $ supp f_0, supp f_1 \subset \Omega$. Then
\begin{equation}
\label{apriori0}
\int_{k<|\omega|}
\|u( ,\omega)\|^2_{(0)}(\partial\Omega) d\omega \leq C k^{-2}\left(\|f_0\|^2_{(1)}(\Omega)+
\|f_1\|^2_{(0)}(\Omega)\right),
\end{equation}
\begin{equation}
\label{apriori}
\int_{k<|\omega|}
\omega^2\|u( ,\omega)\|^2_{(0)}(\partial\Omega) d\omega +
\int_{k<|\omega|}
\|\nabla u( ,\omega)\|^2_{(0)}(\partial\Omega) \leq C k^{-2}\left(\|f_0\|^2_{(2)}(\Omega)+
\|f_1\|^2_{(1)}(\Omega)\right).
\end{equation}
\end{lemma}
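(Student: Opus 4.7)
The plan is to convert the spectral tail into a time integral by Parseval, localize it to a bounded interval by Huygens' principle, and close with a Sakamoto-type boundary trace inequality. On $\{|\omega|>k\}$ the trivial estimates $1\le \omega^2/k^2$ and $\omega^2\le \omega^4/k^2$ reduce \eqref{apriori0} and \eqref{apriori} to bounding the full-line integrals of $\omega^{2\ell}\|u(\cdot,\omega)\|^2_{(0)}(\partial\Omega)$ for $\ell=1,2$, together with $\omega^2\|\nabla u(\cdot,\omega)\|^2_{(0)}(\partial\Omega)$, by $C(\|f_0\|_{(1)}^2+\|f_1\|_{(0)}^2)$ and $C(\|f_0\|_{(2)}^2+\|f_1\|_{(1)}^2)$ respectively.

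Next, using \eqref{uk} and integrating by parts $\ell$ times in $t$ one obtains $(-i\omega)^\ell u(x,\omega)=(2\pi)^{-1/2}\int\partial_t^\ell U(x,t)\,e^{i\omega t}\,dt$ for each $x\in\partial\Omega$. The boundary contributions at $t=0$ vanish because $U(\cdot,0)=f_0$ and $\partial_t U(\cdot,0)=-f_1$ are supported strictly inside $\Omega$ and hence vanish on $\partial\Omega$; the contribution at $t=+\infty$ vanishes by the strong Huygens' principle in three space dimensions: Kirchhoff's formula shows that $U(x,t)$ depends only on data on the sphere $\{y:|x-y|=t\}$, and this sphere ceases to meet $\bar\Omega$ once $t>T_0:=\sup\{|x-y|:x\in\partial\Omega,\,y\in\bar\Omega\}$, so $U(x,t)=0$ for $t>T_0$. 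Combined with $U\equiv 0$ for $t<0$, Plancherel then yields
$$\int_{-\infty}^{+\infty}\omega^{2\ell}\|u(\cdot,\omega)\|^2_{(0)}(\partial\Omega)\,d\omega=\int_0^{T_0}\|\partial_t^\ell U(\cdot,t)\|^2_{(0)}(\partial\Omega)\,dt,$$
and analogously for the tangential-gradient term after commuting $\nabla_\tau$ with the trace.

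It remains to establish the Sakamoto-type hidden-regularity inequality
$$\int_0^{T_0}\|\partial_t U(\cdot,t)\|^2_{(0)}(\partial\Omega)\,dt\le C\bigl(\|f_0\|_{(1)}^2(\Omega)+\|f_1\|_{(0)}^2(\Omega)\bigr),$$
together with its analogue for $\partial_t^2U$ and $\partial_t\nabla_\tau U$ bounded by $C(\|f_0\|_{(2)}^2+\|f_1\|_{(1)}^2)$. The $\ell=1$ bound is a classical sharp boundary energy estimate for the wave equation; in the present setting it follows from the boundary regularity theory of Sakamoto \cite{S} and Lasiecka-Lions-Triggiani \cite{LLT86} applied to the IBVP satisfied by the restriction of $U$ to $\Omega$, with initial energy controlled by $\|f_0\|_{(1)}^2+\|f_1\|_{(0)}^2$. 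The $\ell=2$ bound follows by applying the same estimate to $V=\partial_tU$ and $W=\partial_{x_i}U$, whose Cauchy data $(-f_1,\Delta f_0)$ and $(\partial_{x_i}f_0,-\partial_{x_i}f_1)$ lie in $H^1(\Omega)\times L^2(\Omega)$ under the stronger hypothesis $f_0\in H^2,\,f_1\in H^1$. The main obstacle is the hidden-regularity inequality itself: the naive trace theorem would require one additional derivative, and it is precisely the one-derivative gain afforded by the Sakamoto/LLT theory that produces the sharp $k^{-2}$ decay in \eqref{apriori0}-\eqref{apriori}.
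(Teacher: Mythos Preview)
Your overall strategy---convert the spectral tail to a time integral via Parseval, localize via Huygens' principle, then invoke a Sakamoto-type boundary trace estimate---is exactly the paper's approach, and your derivation of \eqref{apriori} by applying the $\ell=1$ bound to $\partial_t U$ and $\partial_{x_i}U$ in fact spells out what the paper defers to \cite{CIL2016}.

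The one point that needs sharpening is your justification of the hidden-regularity inequality itself. You write that it ``follows from the boundary regularity theory of Sakamoto \cite{S} and Lasiecka--Lions--Triggiani \cite{LLT86} applied to the IBVP satisfied by the restriction of $U$ to $\Omega$''. But the restriction $U|_{\Omega}$ does not satisfy a well-posed IBVP: it solves the wave equation with the given initial data, yet no boundary condition is prescribed on $\partial\Omega\times(0,T)$---the Dirichlet trace is precisely the quantity you are trying to bound. The Sakamoto/LLT theory takes the lateral boundary datum as \emph{input} and controls the normal derivative (or conversely); it does not, as invoked, bound the Dirichlet trace of a solution to the free Cauchy problem on $\mathbb{R}^3$.

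The paper closes this gap by recasting the Cauchy problem \eqref{Cauchy} as a \emph{transmission problem} across $\partial\Omega$: $U^{-}$ in $\Omega$ and $U^{+}$ in $\mathbb{R}^3\setminus\bar\Omega$ each solve the wave equation with the respective restrictions of the initial data (in particular $U^{+}$ has zero initial data), together with the zero-jump transmission conditions $U^{+}-U^{-}=0$, $\partial_\nu U^{+}-\partial_\nu U^{-}=0$ on $\partial\Omega\times(0,\infty)$. The extension of Sakamoto's estimates to such transmission problems (the paper cites \cite{I1}) then yields $\|U\|^2_{(1)}(\partial\Omega\times(0,D))\le C(\|f_0\|^2_{(1)}+\|f_1\|^2_{(0)})$ directly. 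With this correction in place your argument is complete and coincides with the paper's.
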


\begin{proof}

We claim that
\begin{equation}
\label{energy}
\| U\|^2_{(1)}
(\partial\Omega \times (0,D)) \leq
C\left(\|f_0\|^2_{(1)}(\Omega)+
\|f_1\|^2_{(0)}(\Omega)\right).
\end{equation}
Indeed, the initial value problem (\ref{Cauchy})
can be viewed as the (strictly) hyperbolic
transmission problem
$$
\partial_t^2U^--\Delta U^-=0\;\text{in}\;\Omega\times(0,+\infty),\;
\partial_t^2U^+-\Delta U^+=0\;\text{in}\;(\mathbb R^3 \setminus \bar\Omega)\times(0,+\infty),\;
$$
$$
U^-= -f_0,\;\partial_tU^-= f_1\;\text{in}\;\Omega\times\{0\},\;
U^+=-f_0,\;\partial_tU^+=f_1\;\text{in}\;
(\mathbb R^3 \setminus \bar\Omega)\times\{0\},
$$
$$
U^+-U^-= g_0,\;
\partial_{\nu}U^+-
\partial_{\nu}U^-
= g_1\;\text{on}\;\partial\Omega\times (0,+\infty),
$$
with $g_0=0, g_1=0$. Then (\ref{energy}) follows
from the generalization \cite{I1} of Sakamoto
results \cite{S} onto transmission problems.

Obviously, the following inequalities hold
\begin{align*}
\int_{k<|\omega|}
\|u( ,\omega)\|^2_{(0)}(\partial\Omega)
d\omega
& \leq k^{-2}
\int_{k<|\omega|}
\omega^2\|u( ,\omega)\|^2_{(0)}(\partial\Omega)
d\omega \\
& \leq
k^{-2} \int_{\mathbb R}
\omega^2\|u( ,\omega)\|^2_{(0)}(\partial\Omega)
d\omega =  k^{-2}
\int_{\mathbb R}
\|\partial_t U( ,t)\|^2_{(0)}(\partial\Omega)
dt
\end{align*}
by the Parseval's identity. Since according to the Huygens' principle $U( ,t)=0$ when $D<t$,
the last inequality combined with (\ref{energy})
implies the bound (\ref{apriori0}).

(\ref{apriori}) is similarly  derived in \cite{CIL2016}, Lemma 4.1.

\end{proof}

The main novelty compared with \cite{CIL2016} is the next result which follows almost immediately from the Huygens' principle for the initial value problem and the known bounds for  initial boundary value hyperbolic problems.

\begin{lemma}
\label{lem_observ}
Let $U$ be a solution to
(\ref{Cauchy}) with $f_1 \in L^2(\Omega)$,$f_0\in H^1(\Omega)$, $ supp f_0, supp f_1\subset \Omega$.
Then
\begin{equation}
\label{exact0}
\|f_0\|^2_{(0)}(\Omega)+
\|f_1\|^2_{(-1)}(\Omega) \leq
C\| U\|^2_{(0)}(\partial\Omega\times (0, D)),
\end{equation}
\begin{equation}
\label{exact}
\|f_0\|^2_{(1)}(\Omega)+
\|f_1\|^2_{(0)}(\Omega) \leq
C\left(\|\partial_t U\|^2_{(0)}(\partial\Omega\times (0, D))+
\| U \|^2_{(1)}(\partial\Omega\times(0,D))\right).
\end{equation}
\end{lemma}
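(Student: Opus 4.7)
The strategy is to use Huygens' principle to replace the pure initial value problem \eqref{Cauchy} by a well-posed backward initial boundary value problem for the wave equation on the finite cylinder $\Omega\times(0,D)$, and then invoke Sakamoto-type sharp regularity for hyperbolic IBVPs with Dirichlet lateral data.

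First I would establish that $U(\cdot,D)=0$ and $\partial_t U(\cdot,D)=0$ on $\bar\Omega$. This is immediate from the Kirchhoff formula in three spatial dimensions: $U(x,t)$ and $\partial_t U(x,t)$ depend only on $f_0,f_1$ restricted to the sphere $\{y:|y-x|=t\}$. Since $\mathrm{supp}\,f_0,\mathrm{supp}\,f_1\subset\bar\Omega$ and $D=\sup_{x,y\in\Omega}|x-y|$, for any $x\in\bar\Omega$ and $t>D$ this sphere is disjoint from the supports, and by continuity the vanishing persists at $t=D$.

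Next I would reverse time by setting $V(x,\tau):=U(x,D-\tau)$ for $\tau\in[0,D]$. Then $V$ solves
\begin{equation*}
\partial_\tau^2 V-\Delta V=0\text{ in }\Omega\times(0,D),\quad V(\cdot,0)=0,\ \partial_\tau V(\cdot,0)=0,\quad V=h\text{ on }\partial\Omega\times(0,D),
\end{equation*}
where $h(x,\tau):=U(x,D-\tau)$; by construction $V(\cdot,D)=f_0$ and $\partial_\tau V(\cdot,D)=f_1$. The compatibility condition at $\tau=0$ is automatic, since Huygens' principle forces $h(\cdot,0)=U(\cdot,D)=0$ on $\partial\Omega$.

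Finally I would apply the standard sharp regularity theorems for the wave IBVP with zero initial data (\cite{S}, \cite{LLT86}). Lions' transposition argument shows that the map $h\mapsto(V,\partial_\tau V)$ is bounded $L^2(\partial\Omega\times(0,D))\to C([0,D];L^2(\Omega))\times C([0,D];H^{-1}(\Omega))$, and evaluation at $\tau=D$ immediately gives \eqref{exact0}. A parallel energy estimate upgrades this to a bounded map into $C([0,D];H^1(\Omega))\times C([0,D];L^2(\Omega))$ when $h$ has one extra derivative in time and tangentially and vanishes at $\tau=0$, yielding \eqref{exact}. I expect the main obstacle to be the low-regularity bound \eqref{exact0}: this is not a textbook energy estimate but relies on the transposition interpretation of the IBVP with merely $L^2$ Dirichlet data, producing the delicate $H^{-1}$ control of $\partial_\tau V(\cdot,D)$; once that is in hand, \eqref{exact} follows from classical hidden-regularity estimates.
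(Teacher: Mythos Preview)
Your proposal is correct and follows essentially the same approach as the paper: use Huygens' principle to obtain $U(\cdot,D)=\partial_t U(\cdot,D)=0$ on $\Omega$, then invoke the Sakamoto--Lasiecka--Lions--Triggiani sharp regularity for the resulting initial boundary value problem with Dirichlet lateral data. Your explicit time reversal $V(x,\tau)=U(x,D-\tau)$ and the remarks on Kirchhoff's formula, compatibility, and transposition are helpful elaborations, but the paper simply applies the backward IBVP estimate directly, citing \cite{LLT86}, Theorem 4.1, and \cite{S}.
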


\begin{proof}

Since $supp f_0, supp f_1 \subset \Omega$ from the Huygens' principle it follows that $U=0$ on $\Omega\times (D,+\infty)$. 
Now \eqref{exact0}, \eqref{exact} follow from the
generalizations \cite{LLT86}, Theorem 4.1, of Sakamoto energy estimates \cite{S} for the initial boundary value problem applied to 
the initial boundary value problem
$$
\partial_t^2 U -\Delta U=0 \;\text{on}\;\mathbb{R}^3\times(0,D),\;
U( ,D)=0,\;\partial_t U( ,D)=0\;
\text{on}\;\Omega.
$$

\end{proof}

Finally, we are ready to prove the increasing stability estimate of Theorem 1.1.

\begin{proof}
Without loss of generality, we can assume that $\varepsilon_0 <1$ and
$\pi(D+1)E_0^{-\frac{1}{4}} <1$, otherwise the bound
(\ref{stability10}) is straightforward.

In \eqref{int0},\eqref{int1},\eqref{int1} we let
\begin{equation}
\label{K}
k=K^{\frac{2}{3}} E_0^{\frac{1}{4}},\;
\text{when}\; 2^{\frac{1}{4}}K^{\frac{1}{3}}<
E_0^{\frac{1}{4}},\;\text{and}\;k=K,\;
\text{when}\;E_0^{\frac{1}{4}}\leq
2^{\frac{1}{4}}K^{\frac{1}{3}}.
\end{equation}
If $2^{\frac{1}{4}}K^{\frac{1}{3}}<
E_0^{\frac{1}{4}}$, then from (\ref{boundharm2}),
(\ref{I1omega}), and (\ref{K}) we obtain
\begin{align*}
|I_0(k)| & \leq
e^{2(D+1)k} e^{-\frac{2}{\pi}
\left(\left(\frac{k}{K}\right)^4 -1\right)^{-\frac{1}{2}}E_0} CM_0^2 \\
&\leq
CM_0^2 e^{2(D+1)K^{\frac{2}{3}}E_0^{\frac{1}{4}}-
 \frac{2}{\pi}\left(\frac{K}{k}\right)^2 E_0} =
 CM_0^2 e^{-2K^{\frac{2}{3}} \frac{1}{\pi}E_0^{\frac{1}{2}}\left(1-\pi(D+1)E_0^{-\frac{1}{4}}\right)}.
\end{align*}
Using the assumption at the beginning of the proof and the elementary inequality $e^{-y}\leq \frac{6}{y^3}$ 
 when $0<y$, we conclude that
\begin{equation}
\label{I0h}
|I_0(k)|\leq
CM_0^2 \frac{1}{K^2 E_0^{\frac{3}{2}}\left(1-\pi(D+1)E_0^{-\frac{1}{4}}\right)^3}.
\end{equation}
On the other hand, if $E_0^{\frac{1}{4}} \leq 2^{\frac{1}{4}}K^{\frac{1}{3}}$, then $k=K$ and  from (\ref{K}) we derive that
\begin{equation}
\label{I0l}
|I_0(k)|\leq 2\varepsilon_0^2.
\end{equation}

Hence, using  (\ref{int0}), (\ref{K}), (\ref{I0h}), and
 (\ref{I0l}) we yield
\begin{align}
\int_{\partial\Omega}\int_{-\infty}^{+\infty}
 |u(x,\omega)|^2 d\omega d\Gamma(x)
& =
I_0(k)+ \int_{\partial\Omega}\int_{k<|\omega|}
 |u((x,\omega)|^2 d\omega d\Gamma(x) \nonumber \\
& \leq \varepsilon_0^2+ CM_0^2\frac{1}{K^2 E_0^{\frac{3}{2}}}+
C\frac{\|f_0\|_{(2)}^2+\|f_1\|_{(1)}^2}
{1+K^{\frac{4}{3}}E_0^{\frac{1}{2}}} , \label{data1}
\end{align}
where we used also (\ref{apriori0}).

By Lemma \ref{lem_observ}, we finally derive
$$
\|f_0\|^2_{(0)}(\Omega)+
\|f_1\|^2_{(-1)}(\Omega)  \leq
C\|U\|^2_{(0)}
(\partial\Omega\times (0, D))   \leq
C\| U\|^2_{(0)}(\partial\Omega\times {\mathbb R}) \leq
$$
$$
C \int_{\partial\Omega}
\int_{-\infty}^{+\infty}
|u(x,\omega)|^2 d\omega d\Gamma(x) \leq
C \left(\varepsilon_0^2+ M_1^2\frac{1}{K^2 E_0^{\frac{3}{2}}}+
\frac{\|f_0\|_{(1)}^2+\|f_1\|_{(0)}^2}
{1+K^{\frac{4}{3}}E_0^{\frac{1}{2}}} \right)
$$
due to the Parseval's identity and (\ref{data1}). Since
$$
K^{\frac{4}{3}}E_0^{\frac{1}{2}}< K^2  E_0^{\frac{3}{2}},
$$
when $1<K, 1<E_0$, the proof  of \eqref{stability10} is complete.

 \eqref{stability1} can be proved similarly when we use
 \eqref{exact} instead of \eqref{exact0}

\end{proof}

\section{Increasing stability for elastic  waves}
\label{se_elastic}

 The well-known integral representation for \eqref{Helas},\eqref{radiationelas} yields
\begin{equation}
\label{intelas}
{\bf u}(x,k) =  \int_{\Omega}
{\bf \Phi}(x-y;k)({\bf f}_1(y)+ik{\bf f}_0(y)) dy,
\end{equation}
where
$$
{\bf \Phi}(x-y;k)=\frac{e^{ic_s^{-1}k|x-y|}}{4\pi c_s^2|x-y|}{\bf I}_3+ k^{-2}\nabla_x^2
\frac{e^{ic_s^{-1}k|x-y|}-
e^{ic_p^{-1}k|x-y|}}{4\pi |x-y|},
$$
${\bf I}_3$ is the $3\times 3$ identity matrix and $\nabla^2 \Phi$ is the $3\times 3$ matrix $(\partial_j\partial_m \Phi), j,m=1,2,3$. Observe that
\begin{equation}
\label{Phi}
{\bf \Phi}(x-y;k)=\frac{e^{ic_s^{-1}k|x-y|}}{4\pi c_s^2|x-y|}{\bf I}_3+ k^{-2}\nabla_x^2
\frac{e^{ic_s^{-1}k|x-y|}-
e^{ic_p^{-1}k|x-y|}-ik(c_s^{-1}-c_p^{-1})|x-y|}{4\pi |x-y|}.
\end{equation}

Due to (\ref{intelas}),
\begin{equation}
\label{int0elas}
\int_{-\infty}^{\infty} \|{\bf u}(,\omega)\|_{(0)}^2(\partial\Omega)d\omega  = I_{0,e}(k) + \int_{k< |\omega|} \|{\bf u}(,\omega)\|_{(0)}^2(\partial\Omega)d\omega,
\end{equation} 
\begin{equation}
 \label{int1elas}
\int_{-\infty}^{\infty} \omega^2\|{\bf u}(,\omega)\|_{(0)}^2(\partial\Omega)d\omega = I_{1,e}(k) + \int_{k< |\omega|} \omega^2\|{\bf u}(,\omega)\|_{(0)}^2(\partial\Omega)d\omega, 
\end{equation}
\begin{equation}
\label{int2elas}
\int_{-\infty}^{\infty} \|\nabla_{\tau} {\bf u}(,\omega)\|_{(0)}^2 (\partial\Omega)d\omega = I_{2,e}(k) + \int_{k< |\omega|} \|\nabla_{\tau} {\bf u}(,\omega)\|_{(0)}^2 (\partial\Omega)d\omega,
\end{equation}
where $I_{1,e}(k)$ and $I_{2,e}(k)$ are defined as
\begin{equation*}
I_{0,e}(k) =
\end{equation*}
\begin{equation}
 2\int_0^k \int_{\partial\Omega}
\left(\int_{\Omega} {\bf \Phi}(x-y;\omega)({\bf f}_1(y)+i\omega {\bf f}_0(y)) dy\right)
\left(\int_{\Omega} {\bf \Phi}(x-y;-\omega)({\bf f}_1(y)-i\omega {\bf f}_0(y))
 dy\right) d \Gamma(x) d\omega, \label{I0e}
\end{equation}
$$
I_{1,e}(k) =
$$
\begin{equation}
 2\int_0^k \omega^2\int_{\partial\Omega}
\left(\int_{\Omega} {\bf \Phi}(x-y;\omega)({\bf f}_1(y)+i\omega {\bf f}_0(y)) dy\right)
\left(\int_{\Omega} {\bf \Phi}(x-y;-\omega)({\bf f}_1(y)-i\omega {\bf f}_0(y))
 dy\right) d \Gamma(x) d\omega, \label{I1e}
\end{equation}
$$
I_{2,e}(k) =
$$
\begin{equation}
 2\int_0^k\int_{\partial\Omega}
\left(\int_{\Omega} {\bf \nabla_{\tau,x}\Phi}(x-y;\omega)({\bf f}_1(y)+i\omega {\bf f}_0(y)) dy \right)
\left(\int_{\Omega} \nabla_{\tau,x}{\bf \Phi}(x-y;-\omega)({\bf f}_1(y)-i\omega {\bf f}_0(y))
 dy\right)
d \Gamma(x) d\omega , \label{I2e}
\end{equation}
and we used that $\overline{{\bf u}(x,\omega)}= {\bf u} (x,-\omega)$.

Using \eqref{Phi} and the power series for the exponential function we can see that ${\bf \Phi}$ is an entire analytic function of $k=k_1+ik_2$, so, as in section \ref{se_acoustic}, $I_{0,e}(k), I_{1,e}(k), I_{2,e}(k)$ are entire analytic functions of $k$. 

\begin{lemma}
Let $supp {\bf f}_0, supp {\bf f}_1 \subset \Omega$ and ${\bf f_1} \in H^3(\Omega)$, ${\bf f}_0\in H^3(\Omega)$. Then
\begin{equation}
\label{boundI0elas}
|I_{0,e}(k)|\leq
C \left(|k|
\|{\bf f}_1\|^2_{(2)}(\Omega)+
|k|^3
\|{\bf f}_0\|^2_{(2)}(\Omega)\right)e^{2 Dc_s^{-1}|k_2|},
\end{equation}
\begin{equation}
\label{boundI1elas}
|I_{1,e}(k)|\leq
C \left(|k|^3
\|{\bf f}_1\|^2_{(2)}(\Omega)+
|k|^5
\|{\bf f}_0\|^2_{(2)}(\Omega)\right)e^{2 Dc_s^{-1}|k_2|},
\end{equation}
\begin{equation}
\label{boundI2elas}
|I_{2,e}(k)|\leq
C \left(|k|
\|{\bf f}_1\|^2_{(3)}(\Omega)+
|k|^3
\|{\bf f}_0\|^2_{(3)}(\Omega)\right)e^{2 D c_s^{-1}|k_2|},
\end{equation}
where $ D= \sup |x-y|$ over
$x,y \in \Omega$.
\end{lemma}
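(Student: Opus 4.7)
The plan is to split the Kupradze fundamental tensor into its scalar piece and its regularized Hessian piece, and to reduce the estimate of each to the scalar calculus of Lemma 2.1. Using \eqref{Phi}, write ${\bf \Phi}(x-y;k) = {\bf \Phi}_1 + {\bf \Phi}_2$ with ${\bf \Phi}_1 = \frac{e^{ic_s^{-1}k|x-y|}}{4\pi c_s^2|x-y|}{\bf I}_3$ and ${\bf \Phi}_2 = k^{-2}\nabla_x^2 \psi(k, |x-y|)$, where
\begin{equation*}
\psi(k,r) = \frac{e^{ic_s^{-1}kr} - e^{ic_p^{-1}kr} - ik(c_s^{-1}-c_p^{-1})r}{4\pi r}.
\end{equation*}
The contribution of ${\bf \Phi}_1$ to each of $I_{0,e}$, $I_{1,e}$, $I_{2,e}$ is handled by the scalar argument of Lemma 2.1 essentially verbatim: the oscillating factor $e^{ik|x-y|}$ is replaced by $e^{ic_s^{-1}k|x-y|}$, so the growth factor becomes $e^{c_s^{-1}|k_2|D}$ and some constants absorb a factor depending on $c_s$. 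This already yields bounds of the stated form involving only $\|{\bf f}_j\|_{(0)}$ (or, for $I_{2,e}$, $\|{\bf f}_j\|_{(1)}$) norms on the right-hand side.

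For ${\bf \Phi}_2$ the factor $k^{-2}$ looks singular at $k=0$ but is cancelled by the quadratic vanishing of the Taylor-subtracted numerator of $\psi$. I would quantify this by the elementary inequality
\begin{equation*}
|e^{iz} - 1 - iz| \leq \tfrac{1}{2}|z|^2\, e^{|\mathrm{Im}\, z|},
\end{equation*}
which follows at once from the identity $e^{iz}-1-iz = -z^2 \int_0^1 (1-t) e^{itz}\, dt$. Applied with $z=c_s^{-1}kr$ and $z=c_p^{-1}kr$ and subtracted, this yields the uniform bound $|k^{-2}\psi(k,r)| \leq Cr\, e^{c_s^{-1}|k_2|r}$, so that once one frees $\psi$ from the Hessian the remaining kernel is tame. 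The freeing is the integration-by-parts trick already used at the end of the proof of \eqref{boundI2}: since $\psi$ is a function of $x-y$, each $\partial_{x_j}$ flips to $-\partial_{y_j}$; and since ${\bf f}_0, {\bf f}_1$ have support strictly inside $\Omega$, no boundary integrals on $\partial\Omega$ arise. After two such integrations by parts the ${\bf \Phi}_2$-piece of the volume potential becomes, componentwise,
\begin{equation*}
\omega^{-2}\sum_{m}\int_\Omega \psi(\omega, |x-y|)\, \partial_{y_j}\partial_{y_m}({\bf f}_1 + i\omega {\bf f}_0)_m(y)\, dy,
\end{equation*}
and the two $y$-derivatives transferred onto ${\bf f}_j$ account for the jump from $\|\cdot\|_{(0)}$ to $\|\cdot\|_{(2)}$ in the hypothesis.

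From here the proof runs parallel to that of Lemma 2.1. Parametrizing the line integral by $\omega = sk$, $s\in(0,1)$, I would apply the Schwarz inequality in $y$, substitute $|\omega^{-2}\psi(\omega, r)| \leq C D\, e^{c_s^{-1}|k_2|D}$ together with the analogous Helmholtz-kernel bound for ${\bf \Phi}_1$, and evaluate the inner $y$-integral in polar coordinates on $\Omega$ exactly as in the scalar case. This produces \eqref{boundI0elas}; the additional weight $\omega^2\le s^2|k|^2$ present in \eqref{I1e} upgrades the estimate to \eqref{boundI1elas}; and for \eqref{boundI2elas} one further bounds $|\nabla_{\tau,x}\cdot| \leq |\nabla_x \cdot|$ and transfers one more derivative from $x$ to $y$ in both the ${\bf \Phi}_1$ and ${\bf \Phi}_2$ pieces, at the cost of one additional Sobolev index on ${\bf f}_j$. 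The main obstacle worth highlighting is precisely the apparent singularity of $k^{-2}\nabla_x^2\psi$ at small $|k|$: it is essential to keep $\psi$ in its Taylor-subtracted form and to estimate $|e^{iz}-1-iz|$ by its quadratic bound, since a naive pair of bounds on the two exponentials in the numerator would leave an uncancelled $k^{-2}$ that could not be absorbed.
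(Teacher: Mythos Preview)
Your proposal is correct and follows essentially the same approach as the paper: split off the scalar Helmholtz part ${\bf \Phi}_1$, transfer the Hessian in ${\bf \Phi}_2$ onto the sources by integrating by parts in $y$ (the paper writes the resulting second-order terms as $\nabla\,\mathrm{div}\,{\bf f}_j$, which is exactly your $\sum_m \partial_{y_j}\partial_{y_m}({\bf f})_m$), bound the regularized kernel $k^{-2}\psi$ uniformly in $k$, and then repeat the scalar argument of Lemma~2.1. The only cosmetic difference is in the kernel bound itself: the paper proves $|k^{-2}\psi(k,r)|\le C\,e^{c_s^{-1}|k_2|r}/r$ by treating $|k|\ge 1$ and $|k|<1$ separately (the latter via the power series), whereas your Taylor-remainder identity $e^{iz}-1-iz=-z^2\int_0^1(1-t)e^{itz}\,dt$ gives the slightly different but equally serviceable bound $|k^{-2}\psi(k,r)|\le Cr\,e^{c_s^{-1}|k_2|r}$ in one stroke; either bound feeds into the Schwarz/polar-coordinates step without difficulty.
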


\begin{proof}
 It is easy to see that
$$
\nabla^2_x
\frac{e^{ic_s^{-1}k|x-y|}-
e^{ic_p^{-1}k|x-y|}-ik(c_s^{-1}-c_p^{-1})|x-y|}{4\pi |x-y|} = \nabla^2_y
\frac{e^{ic_s^{-1}k|x-y|}-
e^{ic_p^{-1}k|x-y|}-ik(c_s^{-1}-c_p^{-1})|x-y|}{4\pi |x-y|} .
$$
Therefore, integrating by parts with respect to $y$ in \eqref{intelas} we yield
$$
{\bf u}(x,k) =  \int_{\Omega}
(\frac{e^{ic^{-1}k|x-y|}}{4\pi c^2|x-y|}({\bf f}_1(y)+ik{\bf f}_0(y))-
$$
\begin{equation}
\label{intelas0}
k^{-2}
\frac{e^{ic_s^{-1}k|x-y|}-
e^{ic_p^{-1}k|x-y|}-ik(c_s^{-1}-c_p^{-1})|x-y|}{4\pi |x-y|}(\nabla div {\bf f}_1(y)+
ik\nabla div {\bf f}_0(y))) dy.
\end{equation}
Observe that
\begin{equation}
\label{boundPhi}
|k^{-2}
\frac{e^{ic_s^{-1}k|x-y|}-
e^{ic_p^{-1}k|x-y|}-ik(c_s^{-1}-c_p^{-1})|x-y|}{4\pi |x-y|}|\leq C
\frac{e^{c_s^{-1}|k_2||x-y|}}{|x-y|},
\end{equation}
or
$$
|e^{ic_s^{-1}k|x-y|}-
e^{ic_p^{-1}k|x-y|}-ik(c_s^{-1}-c_p^{-1})|x-y|| \leq C
|k|^2e^{c_s^{-1}|k_2||x-y|}{|x-y|}.
$$
Indeed, when $1\leq|k|$ the later bound is obvious, since $c_s<c_p$. When $|k|<1$, then using the series for the 
exponential function we conclude that the left hand side is
$$
|\sum_{m=2}^{+\infty}
((c_s^{-m}-c_p^{-m})
\frac{(ik|x-y|)^m}{m!}|\leq
|k|^2c_s^{-2}|x-y|^2
\sum_{m=2}^{+\infty}
c_s^{-(m-2)}
\frac{(k|x-y|)^{m-2}}{(m-2)!}\leq C|k|^2.
$$
Since $C\leq e^{c_s^{-1}|k_2||x-y|}$, it completes the derivation of \eqref{boundPhi}.
Using \eqref{intelas0} and \eqref{boundPhi} the proof of
\eqref{boundI0elas} proceeds exactly as in the proof of Lemma 2.1.

To demonstrate \eqref{boundI1elas} by using
\eqref{I1e}, \eqref{Phi},
\eqref{intelas}, \eqref{boundPhi} as in the proof of Lemma 2.1 we yield
$$
|I_{1,e}(k)| \leq C
\int_0^1 |k|^3s^2
 \left(\int_{\Omega}\left( 
|{\bf f}_1|^2(y)
+|k|^2s^2|{\bf f}_0|^2+
|\nabla div {\bf f}_1|^2(y)
+|k|^2s^2|\nabla div{\bf f}_0|^2 \right)dy\right)ds 
$$
$$
\int_{\partial\Omega} \left(\int_0^D e^{2c_s^{-1}|k_2|D} d r \right) d \Gamma(x) .
$$
Integrating with respect to $s, r$ we obtain
\eqref{boundI1elas}.

\eqref{boundI2elas} can be derived similarly to
\eqref{boundI0elas} as in Lemma 2.1, integrating by parts and using \eqref{intelas0} and \eqref{boundPhi}.

\end{proof}

Observe that $|k_2|\leq k_1$ when $k\in S$, so
$$
|I_{0,e}(
k)e^{-2(D+1)k}|\leq
C \left(|k_1|^3
\|{\bf f}_1\|^2_{(2)}(\Omega)+
|k_1|^5
\|{\bf f}_0\|^2_{(2)}(\Omega)\right)e^{-2k_1} \leq C M_{2,e}^2.
$$
Noticing that
$$
|I_{0,e}(k)e^{-2(D+1)k}|\leq \varepsilon^2\;
\text{on}\;[0,K],
$$
we conclude that
\begin{equation}
\label{I0omegaelas}
|I_{0,e}(k)e^{-2(D+1)k}|\leq
C \varepsilon^{2\mu(k)}M_{2,e}^2,
\end{equation}
when $K<k<+\infty$. Similar arguments also yield
\begin{equation}
\label{I1omegaelas}
|I_{1,e}(k)e^{-2(D+1)k}|\leq
C \varepsilon^{2\mu(k)}M_3^2,
\end{equation}
\begin{equation}
\label{I2omegaelas}
|I_{2,e}(k)e^{-2(D+1)k}|\leq
C \varepsilon^{2\mu(k)}M_3^2,
\end{equation}
when $K<k<+\infty$.

We consider the dynamical initial value problem
\begin{equation}
\label{Cauchyelas}
\rho\partial_t^2 {\bf U} -\mu\Delta {\bf U}-(\lambda+\mu)\nabla div {\bf U}=0 \;\text{on}\;\mathbb{R}^3\times(0,+\infty),\;
{\bf U}( ,0)={\bf f}_0,\;\partial_t {\bf U}( ,0)=-{\bf f}_1\;
\text{on}\;\mathbb{R}^3.
\end{equation}
We define ${\bf U}(x,t)=0$ when $t<0$.
As above we claim that the solution of \eqref{Helas} coincides with the Fourier transform  of ${\bf U}$, namely
\begin{equation}
\label{ukelas}
{\bf u}(x,k)=
\frac{1}{\sqrt{2\pi}}\int_{-\infty}^{+\infty}
{\bf U}(x,t) e^{ikt} dt.
\end{equation}

To demonstrate \eqref{ukelas} and for the further proofs we recall the well-known \cite{CH}, p. 711, integral representation of the solution
to \eqref{Cauchyelas}
$$
{\bf U}(x,k)=
{\bf V}(-c_s^2\Delta{\bf f}_1+
(c_p^2-c_s^2)\nabla div{\bf f}_1)(x,t)+
\partial_t
{\bf V}(-c_s^2\Delta{\bf f}_0+
(c_p^2-c_s^2)\nabla div{\bf f}_0)(x,t)+
$$
\begin{equation}
\label{integralr}
\partial^2_t
{\bf V}({\bf f}_1)(x,t)+
\partial^3_t
{\bf V}({\bf f}_0)(x,t),
\end{equation}
where
$$
{\bf V}({\bf F})(x,t)=
\frac{1}{c_p^2-c_s^2}
(\int_{|x-y|<c_st}\frac{c_p-c_s}{c_s}{\bf F}(y)dy+
\int_{c_st<|x-y|<c_pt}\frac{c_pt-|x-y|}{|x-y|}
{\bf F}(y)dy),
$$
which is valid at least when
${\bf f}_0\in C_0^3(\Omega),
{\bf f}_1\in C_0^3(\Omega)$.
\eqref{integralr} implies both the finite speed of the propagation and the Huygens' principle for ${\bf U}$, in particular,
\begin{equation}
\label{Huygenselas}
{\bf U}(x,t)=0\;\text{when}\;
c_pt+D<|x-a|\;\text{or when}\;
D<c_s t.
\end{equation}
 Approximating the initial data by smooth functions one can extend \eqref{Huygenselas} onto 
${\bf f}_0\in H^0(\Omega),
{\bf f}_1\in H^{-1}(\Omega)$.

To show \eqref{ukelas} let  
\begin{equation}
\label{uU}
\mathbf{u}^*(x,k)= \frac{1}{\sqrt{2\pi}}\int_{0}^{\infty} \mathbf{U}(\mathbf{x},t)e^{ikt} dt, \;\text{when}\; k>0.
\end{equation}
Using \eqref{Huygenselas} we conclude that 
\begin{equation}
\label{uUcurldiv}
{\bf curl}{\bf u}^*(x,k)= \frac{1}{\sqrt{2\pi}}\int_{0}^{\infty} {\bf curl}{\bf U}(x,t)e^{ikt} dt, \;
div\mathbf{u}^*(x,k)= \frac{1}{\sqrt{2\pi}}\int_{0}^{\infty} div {\bf U}(x,t)e^{ikt} dt\;
\text{when}\; k>0.
\end{equation}
Applying ${\bf curl}$ and $div$ to  \eqref{Helas} we  will have  
\begin{equation}
\label{Hcurl}
\quad  \mu \Delta {\bf curl}{\bf u}+ k^2 \rho {\bf curl}{\bf u}= -{\bf curl}({\bf f}_1+ik{\bf f}_1) ,\;
(\lambda+2\mu) \Delta div {\bf u}+ k^2 \rho  div {\bf u} = - div ({\bf f}_1+ik{\bf f}_0)\;\text{on}\;
\mathbb{R}^3.   
\end{equation}
In addition, from the integral representation of solutions to the Helmholtz equation  it follows that
\begin{equation}
\label{RadiationCondition1}
\lim_{r\rightarrow \infty} r(\partial _r {\bf curl}{\bf u}-\mathrm{i}k_p {\bf curl}{\bf u})=0, \quad \lim_{r\rightarrow \infty} r(\partial _r div {\bf u} -\mathrm{i}k_s div {\bf u})=0, \quad r=|\mathbf{x}|,
\end{equation}
provided $k>0$. Applying ${\bf curl}$ and $div$ to  \eqref{Cauchyelas} we  will have
\begin{equation}
\label{Cauchyelascurl}
\rho\partial_t^2 {\bf curl}{\bf U} -\mu\Delta {\bf curl} {\bf U}=0 \; \;\text{on}\;\mathbb{R}^3\times(0,+\infty),\;
{\bf curl}{\bf U}( ,0)={\bf curl} {\bf f}_0,\;\partial_t {\bf curl}{\bf U}( ,0)=-{\bf curl}{\bf f}_1\;
\text{on}\;\mathbb{R}^3,
\end{equation}
\begin{equation}
\label{Cauchyelasdiv}
\rho\partial_t^2  div{\bf U} - div{\bf curl} {\bf U}=0 \;\text{on}\;\mathbb{R}^3\times(0,+\infty),\;
div {\bf U}( ,0)= div {\bf f}_0,\;\partial_t div {\bf U}( ,0)=- div {\bf f}_1\;
\text{on}\;\mathbb{R}^3.
\end{equation}
As shown in \cite{CIL2016}, \eqref{Hcurl}, \eqref{RadiationCondition1}, {\eqref{Cauchyelascurl}, \eqref{Cauchyelasdiv} imply that
\begin{equation*}
{\bf curl}{\bf u}(x,k)=\frac{1}{\sqrt{2\pi}} \int_{0}^{\infty} {\bf curl}{\bf U}(x,t)e^{ikt} dt, \;
div\mathbf{u}(x,k)= \frac{1}{\sqrt{2\pi}}\int_{0}^{\infty} div {\bf U}(x,t)e^{ikt} dt\;
\text{when}\; k>0.
\end{equation*}
Therefore from \eqref{uUcurldiv} we have
\begin{equation*}
\nabla {\bf curl}(\mathbf{u}-{\mathbf{u}}^*)(x,k)=\mathbf{0},\; div (\mathbf{u}-{\mathbf{u}}^*)(x,k)=\mathbf{0} \quad \textrm{on} \quad  \mathbb{R}^3
\end{equation*}
and hence 
\begin{equation}
\label{harm}
\Delta (\mathbf{u}-{\mathbf{u}}^*)(\mathbf{x},k)=\mathbf{0} \quad \textrm{on} \quad  \mathbb{R}^3\;\text{when}\;0<k.
\end{equation}
Due to integral representations \eqref{int}, \eqref{uU} and the Huygense' principle \eqref{Huygenselas}
the functions ${\bf u}(x,k), 
{\bf u}^*(x,k)$ are entire analytic with respect to $k$
therefore \eqref{harm} holds for all complex $k$. If $0<k_2$ 
From the integral representation \eqref{intelas} function $\mathbf{u}(x,k)$ decays exponentially
as $|x|$ goes to $+\infty$. Using \eqref{uU}, \eqref{Huygenselas} as in \cite{EI}, section 4, one can show that
${\bf u}^*$ also decays exponentially in $|x|$ at fixed $k, 0<k_2$. By the Liouville's Theorem \eqref{harm} and exponential decay imply that 
${\bf u}(x,k)={\bf u}^*(x,k)$ when $0<k_2$. Using analyticity with respect to $k$
 and passing to the limit (as $k_2$ goes to zero) we obtain this equality and therefore \eqref{ukelas} for any real $k$.

\begin{lemma}
\label{Lemma32}
Let function $\mathbf{u}$ be a solution to the forward problem \eqref{Helas}-\eqref{radiationelas} with $\mathbf{f}_1 \in H^3 (\Omega)$ and $\mathbf{f}_0 \in H^4 (\Omega)$, $ supp\mathbf{f}_1, supp\mathbf{f}_0 \subset \Omega $. Then 
\begin{equation}
\label{boundu0}
\int_{k} ^{\infty}  \parallel \mathbf{u}(,\omega) \parallel^2 _{(0)} (\partial \Omega) d\omega \leq C  k^{-2}\Big (  \parallel \mathbf{f}_1 \parallel_{(1)}^{2}(\Omega)+\parallel \mathbf{f}_0 \parallel_{(2)}^{2}(\Omega)  \Big)
\end{equation}
and
\begin{equation}
\label{boundu1}
\int_{k} ^{\infty}(\omega^2    \parallel \mathbf{u}(,\omega) \parallel^2 _{(0)} (\partial \Omega)+ \parallel \mathbf{u}(,\omega) \parallel^2 _{(1)} (\partial \Omega))d\omega \leq C  k^{-2}(  \parallel \mathbf{f}_1 \parallel_{(2)}^{2}(\Omega)+\parallel \mathbf{f}_0 \parallel_{(3)}^{2}(\Omega)  \Big).
\end{equation} 
\end{lemma}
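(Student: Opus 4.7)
The plan is to follow the proof of Lemma \ref{lemma_apriori} verbatim, substituting the dynamical elasticity system \eqref{Cauchyelas} for the scalar wave equation and invoking the elastic counterparts of the two ingredients used there, namely the Fourier/Parseval identification \eqref{ukelas} and the Huygens' principle \eqref{Huygenselas}. For \eqref{boundu0} I would combine the trivial inequality
\begin{equation*}
\int_k^\infty\|\mathbf{u}(,\omega)\|^2_{(0)}(\partial\Omega)\,d\omega \le k^{-2}\int_k^\infty\omega^2\|\mathbf{u}(,\omega)\|^2_{(0)}(\partial\Omega)\,d\omega \le k^{-2}\int_{\mathbb R}\omega^2\|\mathbf{u}(,\omega)\|^2_{(0)}(\partial\Omega)\,d\omega
\end{equation*}
with \eqref{ukelas} and Parseval's identity to rewrite the last quantity as $k^{-2}\int_{\mathbb R}\|\partial_t\mathbf{U}(,t)\|^2_{(0)}(\partial\Omega)\,dt$. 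By the Huygens' principle \eqref{Huygenselas}, $\mathbf{U}(x,t)=0$ on $\partial\Omega$ once $D<c_s t$ (and for $t<0$ by construction), so the time integral collapses to $\|\partial_t\mathbf{U}\|^2_{(0)}(\partial\Omega\times(0,D/c_s))$. The remaining task is to bound this by $\|\mathbf{f}_0\|^2_{(2)}(\Omega)+\|\mathbf{f}_1\|^2_{(1)}(\Omega)$, which I would do by viewing \eqref{Cauchyelas} as a strictly hyperbolic elastic transmission problem across $\partial\Omega$ with zero jump data and applying the Sakamoto-type sharp boundary regularity estimate for the dynamical Lam\'e system from \cite{BL2002}, \cite{GZ}, i.e.\ the elastic analogue of \cite{LLT86}, \cite{S}.

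For \eqref{boundu1} I would run the same argument at one higher order of differentiation: starting from
\begin{equation*}
\int_k^\infty\bigl(\omega^2\|\mathbf{u}\|^2_{(0)}+\|\mathbf{u}\|^2_{(1)}\bigr)(\partial\Omega)\,d\omega \le k^{-2}\int_{\mathbb R}\bigl(\omega^4\|\mathbf{u}\|^2_{(0)}+\omega^2\|\mathbf{u}\|^2_{(1)}\bigr)(\partial\Omega)\,d\omega,
\end{equation*}
I would invoke Parseval to identify the right-hand side with $k^{-2}\bigl(\|\partial_t^2\mathbf{U}\|^2_{(0)}+\|\partial_t\mathbf{U}\|^2_{(1)}\bigr)(\partial\Omega\times(0,D/c_s))$ and then apply the same boundary regularity estimate one order higher to bound this by $\|\mathbf{f}_0\|^2_{(3)}(\Omega)+\|\mathbf{f}_1\|^2_{(2)}(\Omega)$.

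The main obstacle I anticipate is verifying the precise formulation of the Sakamoto-type boundary regularity estimate for the elastic transmission problem at the two Sobolev regularity levels required, since the dynamical Lam\'e system is a coupled hyperbolic system rather than a single wave equation. The one-derivative loss relative to Lemma \ref{lemma_apriori} that is visible in \eqref{boundu0}, \eqref{boundu1} reflects precisely that the sharpest currently available boundary regularity results for this elastic transmission problem are one order weaker than their scalar wave-equation counterpart, a point already flagged in the Introduction; the Fourier/Parseval and Huygens' steps themselves are routine and essentially identical to the acoustic case.
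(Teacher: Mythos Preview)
Your Parseval/Huygens reduction is exactly what the paper does, but the step you flag as the ``main obstacle'' is precisely where your plan and the paper diverge, and your proposed resolution does not go through as written. You want to bound $\|\partial_t\mathbf{U}\|^2_{(0)}(\partial\Omega\times(0,D/c_s))$ (and its higher-order analogues) by invoking a Sakamoto-type boundary regularity estimate for the \emph{elastic transmission problem}, citing \cite{BL2002}, \cite{GZ}. But those references treat the initial \emph{boundary} value problem on a single domain with Dirichlet data, not a transmission problem across $\partial\Omega$; they are what the paper uses in Lemma~\ref{Lemma33}, not here. The Introduction states explicitly that the authors are not aware of sharp regularity results for the elastic transmission problem analogous to the scalar ones used in Lemma~\ref{lemma_apriori}. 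So the ingredient you are assuming is exactly the one the paper cannot locate.

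The paper circumvents this entirely by a more elementary route. It encloses $\Omega$ in a ball $B$ large enough that, by finite propagation speed, $\mathbf{U}=0$ on $\partial B\times(0,c_s^{-1}D)$. Then the \emph{standard interior} energy estimate for the initial boundary value problem on $B$ (with zero lateral data) gives $\|\partial_t\mathbf{U}(\cdot,t)\|_{(1)}(\Omega)\le C(\|\mathbf{f}_0\|_{(2)}+\|\mathbf{f}_1\|_{(1)})$ after applying the estimate to $\partial_t\mathbf{U}$, whose initial data are $\mathbf{f}_1$ and the second-order elliptic operator acting on $\mathbf{f}_0$. A trace theorem then yields $\|\partial_t\mathbf{U}(\cdot,t)\|_{(0)}(\partial\Omega)\le C\|\partial_t\mathbf{U}(\cdot,t)\|_{(1)}(\Omega)$, and integrating in $t$ over $(0,c_s^{-1}D)$ finishes \eqref{boundu0}. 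The bound \eqref{boundu1} is obtained the same way, differentiating once more in $t$ for the $\omega^4$ piece and applying the argument to $\partial_j\mathbf{U}$ for the $H^1(\partial\Omega)$ piece. The one-derivative loss you correctly noticed thus comes from the trace theorem (losing half a derivative) combined with bounding boundary norms via interior $H^1$ norms, not from a weakened Sakamoto estimate.
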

\begin{proof}
Let $B$ be a ball of the radius $(c_pc_s^{-1}+1)D$ centred at a point $a\in \Omega$. Since the (maximal) speed of the propagation for the  dynamical elasticity system
is $c_s$, ${\bf U}(x,t)=0$
when $c_pt< |x-a|-D$, in particular, ${\bf U}=0$ on
$\partial B\times (0, c_s^{-1}D)$. Moreover, due to the Huygens' principle,
${\bf U}=0$ on $\Omega\times
(c_s^{-1}D,+\infty)$.
The standard energy estimate for the initial value problem
\eqref{Cauchyelas} in $B\times (0, c_s^{-1}D)$ with ${\bf U}=0$ on $\partial B\times (0, c_s^{-1}D)$ gives
\begin{equation*}
  \parallel \partial_t\mathbf{U}(,t) \parallel^{2}_{(0)}(\Omega) +
    \parallel \mathbf{U}(,t) \parallel_{(1)}(\Omega)\leq
    C(\|{\bf f}_0\|_{(1)}(\Omega)+
    \|{\bf f}_1\|_{(0)}(\Omega)), 0<t<c_s^{-1}D.
\end{equation*}
Observe that $\partial_t {\bf U}$ solves the  elasticity system \eqref{Cauchyelas} and satisfies the initial conditions
$$
\partial_t {\bf U}={\bf f}_1,\;
\partial^2_t {\bf U}=
\rho^{-1}(\mu\Delta {\bf f}_0+
(\lambda+\mu)\nabla div {\bf f}_0)\;\text{on}\;{\mathbb R}^3\times\{0\}.
$$
 Applying the same energy estimate we yield
\begin{equation*}
  \parallel \partial^2_t\mathbf{U}(,t) \parallel^{2}_{(0)}(\Omega) +
    \parallel \partial_t \mathbf{U}(,t) \parallel_{(1)}(\Omega)\leq
    C(\|{\bf f}_0\|_{(2)}(\Omega)+
    \|{\bf f}_1\|_{(1)}(\Omega)), 0<t<c_s^{-1}D.
\end{equation*}
By Trace Theorems for Sobolev spaces we obtain
\begin{equation}
\label{trace}
\| \partial_t {\bf U}( ,t)\|_{(0)}(\partial\Omega)\leq C
\| \partial_t {\bf U}( ,t)\|_{(1)}(\Omega)\leq
C(\|{\bf f}_0\|_{(2)}(\Omega)+
    \|{\bf f}_1\|_{(1)}(\Omega)), 0<t<c_s^{-1}D.
\end{equation}

Now,
\begin{equation*}
 \int_{k}^{\infty}   \parallel \mathbf{u}(,\omega) \parallel^{2}_{(0)}(\partial \Omega) d\omega\leq 
k^{-2} \int_{k}^{\infty}   \parallel \omega^2\mathbf{u}(,\omega) \parallel^{2}_{(0)}(\partial \Omega) d\omega\leq
\end{equation*}
\begin{equation*} 
k^{-2} \int_{0}^{\infty}   \parallel \partial_t \mathbf{U}(,t)^2 \parallel^{2}_{(0)}(\partial \Omega) d t=
k^{-2} \int_{0}^{c_s^{-1}D}   \parallel \partial_t \mathbf{U}(,t)^2 \parallel^{2}_{(0)}(\partial \Omega) d t
\leq
C(\|{\bf f}_0\|^2_{(2)}(\Omega)+
    \|{\bf f}_1\|^2_{(1)}(\Omega)).
\end{equation*}
due to the Huygens' principle and \eqref{trace}.

This completes a proof of \eqref{boundu0}.

Now we will demonstrate \eqref{boundu1}.

As above,  $\partial^2_t {\bf U}$ solves the  elasticity system \eqref{Cauchyelas} and satisfies the initial conditions
$$
\partial_t^2 {\bf U}=
\rho^{-1}(\mu\Delta {\bf f}_0+
(\lambda+\mu)\nabla div {\bf f}_0),\;
\partial^3_t {\bf U}=
\rho^{-1}(\mu\Delta {\bf f}_1+
(\lambda+\mu)\nabla div {\bf f}_1)\;\text{on}\;{\mathbb R}^3\times\{0\},
$$
so
\begin{equation*}
  \parallel \partial^2_t\mathbf{U}(,t) \parallel^{2}_{(1)}(\Omega) \leq
    C(\|{\bf f}_0\|^2_{(3)}(\Omega)+
    \|{\bf f}_1\|^2_{(2)}(\Omega)),
\end{equation*}
and by Trace Theorems 
\begin{equation}
\label{traceU}
\| \partial_t^2 {\bf U}( ,t)\|_{(0)}(\partial\Omega)\leq
C(\|{\bf f}_0\|_{(3)}(\Omega)+
    \|{\bf f}_1\|_{(2)}(\Omega)), 0<t<c_s^{-1}D.
\end{equation}

Hence,
\begin{equation*}
 \int_{k}^{\infty}  \omega^2 \parallel \mathbf{u}(,\omega) \parallel^{2}_{(0)}(\partial \Omega) d\omega\leq 
k^{-2} \int_{k}^{\infty}  \omega^4 \parallel \mathbf{u}(,\omega) \parallel^{2}_{(0)}(\partial \Omega) d\omega\leq
\end{equation*}
\begin{equation*} 
k^{-2} \int_{0}^{\infty}   \parallel \partial_t^2 \mathbf{U}(,t) \parallel^{2}_{(0)}(\partial \Omega) d t=
k^{-2} \int_{0}^{c_s^{-1}D}   \parallel \partial_t^2 \mathbf{U}(,t) \parallel^{2}_{(0)}(\partial \Omega) d t
\leq
C(\|{\bf f}_0\|^2_{(3)}(\Omega)+
    \|{\bf f}_1\|^2_{(2)}(\Omega)),
\end{equation*}
due to \eqref{traceU}.

To complete the proof of Lemma 3.2 we apply the proof of \eqref{boundu0} to $\partial_j {\bf U}, j=1,2,3$ instead of ${\bf U}$. 

 The proof is complete.
\end{proof}

As above the next result follows almost immediately from the Huygens' principle for the initial value problem and the known bounds for  initial boundary value hyperbolic problems.

\begin{lemma}
\label{Lemma33}
Let ${\bf U}$ be a solution to
\eqref{Cauchyelas} with 
${\bf f}_1 \in L^2(\Omega)$,
 ${\bf f}_0\in H^1(\Omega)$, $ supp {\bf f}_0, supp {\bf f}_1\subset \Omega$.
Then there is $C$ such that
\begin{equation}
\label{exact0e}
\|{\bf f}_0\|^2_{(0)}(\Omega)+
\|{\bf f}_1\|^2_{(-1)}(\Omega) \leq
C\| {\bf U} \|^2_{(0)}(\partial\Omega\times
(0,c_s^{-1}D))
\end{equation}
and
\begin{equation}
\label{exacte}
\|{\bf f}_0\|^2_{(1)}(\Omega)+
\|{\bf f}_1\|^2_{(0)}(\Omega) \leq
C\| {\bf U} \|^2_{(1)}(\partial\Omega\times(0,c_s^{-1}D)).
\end{equation}
\end{lemma}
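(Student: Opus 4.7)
The plan is to mirror the proof of Lemma \ref{lem_observ} for the scalar Helmholtz case, substituting the Huygens' principle for the elastic wave system in place of the one for the scalar wave equation. The architecture is identical: reduce the statement on $\mathbb{R}^3$ to a boundary-controlled initial boundary value problem on the bounded cylinder $\Omega \times (0, c_s^{-1}D)$, and then apply a Sakamoto-type sharp regularity/trace estimate, here for the Lam\'e system rather than the scalar wave operator.

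First I would invoke the Huygens' principle for the dynamical elasticity system, recorded in \eqref{Huygenselas} and flowing from the representation \eqref{integralr}: since $supp\,{\bf f}_0, supp\,{\bf f}_1 \subset \Omega$, the field ${\bf U}(x,t)$ and its time derivative vanish on $\Omega$ once $c_s^{-1}D<t$, so in particular ${\bf U}(\,,c_s^{-1}D)=0$ and $\partial_t {\bf U}(\,,c_s^{-1}D)=0$ on $\Omega$. Restricting ${\bf U}$ to $\Omega \times (0, c_s^{-1}D)$, it solves the Lam\'e system with zero Cauchy data at $t = c_s^{-1}D$, lateral Dirichlet trace ${\bf U}|_{\partial\Omega \times (0, c_s^{-1}D)}$, and the unknown initial data ${\bf f}_0$ and $-{\bf f}_1$ at $t=0$.

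Next, after a time reversal $t \mapsto c_s^{-1}D-t$, this is a boundary-controlled IBVP for the Lam\'e system with zero initial data and prescribed Dirichlet lateral data, in which the sought ${\bf f}_0, {\bf f}_1$ appear as the solution's Cauchy data at the final time. The sharp boundary regularity estimates of \cite{BL2002}, \cite{GZ} — the elastic-system analogues of the scalar results \cite{LLT86}, \cite{S} already used in Lemma \ref{lem_observ} — give precisely the two inequalities needed: $L^2(\partial\Omega\times(0,c_s^{-1}D))$ Dirichlet data controls the solution in $C([0, c_s^{-1}D]; L^2(\Omega)) \cap C^1([0, c_s^{-1}D]; H^{-1}(\Omega))$, which yields \eqref{exact0e}, while $H^1$ Dirichlet data controls it in $C([0, c_s^{-1}D]; H^1(\Omega)) \cap C^1([0, c_s^{-1}D]; L^2(\Omega))$, which yields \eqref{exacte}.

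The main technical obstacle is not in the overall scheme, which is identical to Lemma \ref{lem_observ}, but in the invocation of the sharp boundary regularity for the \emph{system}: the Lam\'e operator is not strictly hyperbolic and the Helmholtz decomposition into ${\bf u}(\,;p)+{\bf u}(\,;s)$ does not commute with the Dirichlet data on $\partial\Omega$, so one cannot decouple \eqref{Cauchyelas} into two independent scalar wave equations and apply the scalar estimate componentwise. This is exactly what forces the appeal to the vector-valued versions \cite{BL2002}, \cite{GZ}; once those are in hand, a standard density argument from $C_0^\infty$ initial data (already used to extend \eqref{Huygenselas}) reaches the stated regularity classes ${\bf f}_0 \in H^1(\Omega), {\bf f}_1 \in L^2(\Omega)$ and, via duality and interpolation, the weaker pairing $L^2 \times H^{-1}$ used in \eqref{exact0e}.
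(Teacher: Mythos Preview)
Your proposal is correct and follows essentially the same approach as the paper: invoke the Huygens' principle \eqref{Huygenselas} to obtain ${\bf U}(\,,c_s^{-1}D)=\partial_t{\bf U}(\,,c_s^{-1}D)=0$ on $\Omega$, then apply the Sakamoto-type estimates for the elastic initial boundary value problem from \cite{BL2002}, \cite{GZ}. The only detail the paper adds that you omit is that the results in \cite{GZ} are stated as containments in function spaces rather than as explicit inequalities, so the bounds \eqref{exact0e}, \eqref{exacte} are extracted via the Closed Graph Theorem applied to the closed operator mapping lateral Dirichlet data to initial data.
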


\begin{proof}

Since $supp {\bf f}_0, supp {\bf f}_1 \subset \Omega$ from the Huygens' principle it follows that ${\bf U}=0$ on $\Omega\times (c_s^{-1}D,+\infty)$. Now \eqref{exact0e}, \eqref{exacte} follow from the generalizations \cite{BL2002}, Theorem 1, \cite{GZ},Theorem 1.1, Lemma 3.5, of Sakamoto energy estimates \cite{S} for the initial boundary value problem applied to the initial boundary value problem
$$
\rho \partial_t^2 {\bf U} -\mu\Delta {\bf U}-(\lambda+\mu) \nabla div {\bf U}=0 \;\text{on}\;\Omega\times(0,c_s^{-1}D),\;
{\bf U}( ,c_s^{-1}D)=0,\;\partial_t {\bf U}( ,c_s^{-1}D)=0\;
\text{on}\;\Omega.
$$
Observe that the results in \cite{GZ} claim that the solution is contained in the corresponding function spaces. Since the operator mapping the initial data into the lateral boundary data is closed in these spaces, the  bounds
\eqref{exact0e}, \eqref{exacte} follow from the Closed Graph Theorem.

\end{proof}

Finally, we are ready to prove the increasing stability estimate of Theorem 1.2.

\begin{proof}

We start with a proof of \eqref{stability0elas}.

Without loss of generality, we can assume that $\varepsilon <1$ and
$2\pi(D+1)E^{-\frac{1}{4}} <1$, otherwise the bound
\eqref{stability0elas} is straightforward.

In \eqref{int0elas} we let
\begin{equation}
\label{Kelas}
k=K^{\frac{2}{3}} E^{\frac{1}{4}},\;
\text{when}\; 2^{\frac{1}{4}}K^{\frac{1}{3}}<
E^{\frac{1}{4}},\;\text{and}\;k=K,\;
\text{when}\;E^{\frac{1}{4}}\leq
2^{\frac{1}{4}}K^{\frac{1}{3}}.
\end{equation}
If $2^{\frac{1}{4}}K^{\frac{1}{3}}<
E^{\frac{1}{4}}$, then from \eqref{I0omegaelas},
\eqref{boundharm2}, and \eqref{Kelas} we obtain
\begin{align*}
|I_{0,e}
(k)| & \leq
e^{2(D+1)k} e^{-\frac{2}{\pi}
\left(\left(\frac{k}{K}\right)^4 -1\right)^{-\frac{1}{2}}E} CM_{2,e}^2 \\
&\leq
CM_{2,e}^2 e^{2(D+1)K^{\frac{2}{3}}E^{\frac{1}{4}}-
 \frac{2}{\pi}\left(\frac{K}{k}\right)^2 E} =
 CM_{2,e}^2 e^{-2K^{\frac{2}{3}} \frac{1}{\pi}E^{\frac{1}{2}}\left(1-\pi(D+1)E^{-\frac{1}{4}}\right)}.
\end{align*}
Using the assumption at the beginning of the proof and the elementary inequality $e^{-y}\leq \frac{6}{y^3}$  when $0<y$, we conclude that
\begin{equation}
\label{I0helas}
|I_{0,e}(k)|\leq
CM_{2,e}^2 \frac{1}{K^2 E^{\frac{3}{2}}\left(1-\pi(D+1)E^{-\frac{1}{4}}\right)^3}.
\end{equation}
On the other hand, if $E^{\frac{1}{4}} \leq 2^{\frac{1}{4}}K^{\frac{1}{3}}$, then $k=K$ and  from (\ref{Kelas}) we derive that
\begin{equation}
\label{I0lelas}
|I_{0,e}(k)|\leq 2\varepsilon^2.
\end{equation}

Hence, using \eqref{int0elas}, \eqref{I0helas}, \eqref{I0lelas}, and (\ref{Kelas}) we yield
\begin{align}
\int_{\partial\Omega}\int_{-\infty}^{+\infty}
|{\bf u}(x,\omega)|^2 d\omega d\Gamma(x)
& =
I_{0,e}(k)+ \int_{\partial\Omega}\int_{k<|\omega|}
|{\bf u}((x,\omega)|^2 d\omega d\Gamma(x) \nonumber \\
& \leq \varepsilon^2+ CM_{2,e}^2\frac{1}{K^2 E^{\frac{3}{2}}}+
C\frac{\|{\bf f}_0\|_{(2)}^2+\|{\bf f}_1\|_{(1)}^2}
{1+K^{\frac{4}{3}}E^{\frac{1}{2}}} , \label{data1elas}
\end{align}
where we used (\ref{boundu0}) and (\ref{Kelas}).

By Lemma \ref{Lemma33}, we finally derive
$$
\|{\bf f}_0\|^2_{(0)}(\Omega)+
\|{\bf f}_1\|^2_{(-1)}(\Omega)  \leq
C\|{\bf U}\|^2_{(0)}(\partial\Omega\times (0, c_s^{-1}D))  \leq
C\|{\bf U}\|^2_{(0)}(\partial\Omega\times {\mathbb R})= 
$$
$$
C\int_{\partial\Omega}
\int_{-\infty}^{+\infty}
 |{\bf u}(x,\omega)|^2 d\omega d\Gamma(x)  \leq
C \left(\varepsilon^2+ M_{2,e}^2\frac{1}{K^2 E^{\frac{3}{2}}}+
\frac{\|{\bf f}_0\|_{(2)}^2+\|{\bf f}_1\|_{(1)}^2}
{1+K^{\frac{4}{3}}E^{\frac{1}{2}}}\right)
$$
due to the Parseval's identity and (\ref{data1elas}). Since
$$K^{\frac{4}{3}}E^{\frac{1}{2}}< K^2  E^{\frac{3}{2}},
$$
when $1<K, 1<E$, the proof of
\eqref{stability0elas} is complete.

\eqref{stabilityelas} can be derived similarly.
As above,
\begin{align}
\int_{-\infty}^{+\infty}
\omega^2 \| {\bf u}(,\omega)\|_{(0)}^2(\partial\Omega) d\omega  & =
I_{1,e}(k)+ 
\int_{k<|\omega|}
\| {\bf u}((,\omega)\|^2_{(0)}(\partial\Omega) d\omega  \nonumber \\
& \leq \varepsilon^2 + C M_3^2\frac{1}{K^2 E_e^{\frac{3}{2}}}+
C\frac{\|{\bf f}_0\|_{(3)}^2+\|{\bf f}_1\|_{(2)}^2}
{1+K^{\frac{4}{3}}E_e^{\frac{1}{2}}}. \label{data2elas}
\end{align}
and
\begin{align}
\int_{-\infty}^{+\infty}
\| {\bf u}(,\omega)\|_{(1)}(\partial\Omega)^2 d\omega  & =
I_{2,e}(k)+ 
\int_{k<|\omega|}
\| {\bf u}((,\omega)\|_{(1)}(\partial\Omega)^2 d\omega  \nonumber \\
& \leq \varepsilon^2 + C M_3^2\frac{1}{K^2 E_e^{\frac{3}{2}}}+
C\frac{\|{\bf f}_0\|_{(3)}^2+\|{\bf f}_1\|_{(2)}^2}
{1+K^{\frac{4}{3}}E_e^{\frac{1}{2}}}. \label{data3elas}
\end{align}

By using \eqref{exacte}, we finally derive
\begin{align*}
\|{\bf f}_0\|^2_{(1)}(\Omega)+
\|{\bf f}_1\|^2_{(0)}(\Omega)) &  \leq
C(\|\partial_t U\|^2_{(0)}(\partial\Omega\times {\mathbb R})+
\|\nabla_{\tau} U \|^2_{(0)}(\partial\Omega\times {\mathbb R})) \\
& =
C\left(
\int_{-\infty}^{+\infty}
\omega^2 \|{\bf u}(,\omega)\|_{(0)}^2(\partial\Omega) d\omega + \int_{-\infty}^{+\infty}
 \|{\bf u}( ,\omega)\|_{(1)}^2(\partial\Omega) d\omega  \right) \\
& \leq
C \left(\varepsilon_e^2+ M_3^2\frac{1}{K^2 E_e^{\frac{3}{2}}}+
\frac{\|{\bf f}_0\|_{(3)}^2+\|{\bf f}_1\|_{(2)}^2}
{1+K^{\frac{4}{3}}E_e^{\frac{1}{2}}}\right)
\end{align*}
due to the Parseval's identity and \eqref{data2elas},  \eqref{data3elas}. Since
$$K^{\frac{4}{3}}E_e^{\frac{1}{2}}< K^2  E_e^{\frac{3}{2}},
$$
when $1<K, 1<E_e$, the proof of is complete.
\end{proof}

\section{Conclusion}

The next analytical issue is to obtain explicit constants
$C$ in the stability estimates of Theorem \ref{thm1} for some simple but important domains $\Omega$, like a sphere or a cube. This seems to be quite realistic. Another possible development is to get these estimates when $(0,K)$ is replaced
by $(K_*,K)$ with, say, $K_*=K/2$.
One expects the stability results to be extended onto more general scalar elliptic operators and elasticity systems satisfying non trapping (pseudo convexity) conditions in $\Omega$. One does not have the Huygens' principle in the general case, but 
the exact controllability theory for corresponding hyperbolic equations is developed in  \cite{T} and there are certain results for the elasticity system, obtained by the multipliers  method.
The needed scattering theory also available, although not so transparent and explicit as for the Helmholtz equation and the classical elasticity system.
In this more general case it is difficult to expect constants $C$ to be explicit.
By using the sharp uniqueness of the continuation results for the isotropic dynamical Maxwell and elasticity systems \cite{EINT} one expects to obtain uniqueness in the inverse source problems for these systems. Applying available exact observability for these systems and the method of this paper one anticipates increasing stability in the inverse source problems. 
It is feasible and interesting to obtain similar results in the case of the data on a part of $\partial \Omega$. Some parts of the methods of \cite{CIL2016} and of this paper can be useful, although in the place of nearly Lipschitz stability bounds \eqref{stability10},
\eqref{stability1}, \eqref{stability0elas}, \eqref{stabilityelas} one probably should expect near H\"older type stability originated from the stability estimates in the lateral Cauchy problem for hyperbolic equations \cite{I} section 3.4. It is important to collect further numerical evidence of the increasing stability for more complicated geometries and sources. Some very convincing numerical results are given in \cite{CIL2016},
\cite{IL}.

One can look at the different inverse source problem which is the linearized inverse problem for the Schr\"odinger potential: find $f$ (supported in $\Omega\subset \mathbb R^3$) from
\begin{align*}
\int_{\Omega} f(y) \frac{e^{ki|x-y|}}{|x-y|}\frac{e^{ki|z-y|}}{|z-y|}dy
\end{align*}
given for $x,z\in \Gamma \subset \partial \Omega$, where one expects quite explicit increasing stability bounds for $f$ for a large $k$. 
Analytic study of this problem is started in \cite{I3}, \cite{ILW2016}.

\section*{Aknowledgement}
This research is supported in part by the Emylou Keith and Betty Dutcher Distinguished Professorship 
and the NSF grant DMS 15-14886. 

\bibliographystyle{amsplain}
\providecommand{\bysame}{\leavevmode\hbox to3em{\hrulefill}\thinspace}
\providecommand{\MR}{\relax\ifhmode\unskip\space\fi MR }
\providecommand{\MRhref}[2]{%
  \href{http://www.ams.org/mathscinet-getitem?mr=#1}{#2}
}
\providecommand{\href}[2]{#2}

\end{document}